\numberwithin{equation}{section}
\numberwithin{figure}{section}
\theoremstyle{plain}
\newtheorem{thm}{\protect\theoremname}[section]
  \theoremstyle{plain}
  \newtheorem{prop}[thm]{\protect\propositionname}
  \theoremstyle{plain}
  \newtheorem{lem}[thm]{\protect\lemmaname}
  \theoremstyle{plain}
  \newtheorem{cor}[thm]{\protect\corollaryname}
  \theoremstyle{definition}
  \newtheorem{example}[thm]{\protect\examplename}
  \providecommand{\corollaryname}{Corollary}
  \providecommand{\examplename}{Example}
  \providecommand{\lemmaname}{Lemma}
  \providecommand{\propositionname}{Proposition}
\providecommand{\theoremname}{Theorem}
\begin{document}

\title{Cohen-Macaulayness of Rees Algebras of Modules}

\author{Kuei-Nuan Lin}

\keywords{Rees algebra, Cohen-Macaulay ring, Bourbaki ideal, Analytic spread, reduction number}
\subjclass[2000] {13A30, 13B22, 13C14, 13C15}

\begin{abstract}
We provide the sufficient conditions for Rees algebras of modules to be Cohen-Macaulay, which has been proven in the case of Rees algebras of ideals in \cite{JU} and \cite{GNN}. As it turns out the generalization from ideals to modules is not just a routine generalization, but requires
a great deal of technical development. We use the technique of generic Bourbaki
ideals introduced by Simis, Ulrich and Vasconcelos \cite{SUV} to obtain the Cohen-Macaulayness of Rees Algebras of modules.
\end{abstract}

\maketitle

\section{Introduction}

Rees algebras of modules include multi-Rees rings, which correspond
to the case where the module is a direct sum of ideals. They provide
the rings of functions on the blow up of a scheme along several subschemes.
Another important instance when Rees algebras of modules appear in
algebraic geometry is when the module is the module of differentials
or the module of top differential forms of the homogeneous coordinate
ring of a projective $k$-variety $X$, where $k$ is a field; in this case the Rees ring
tensored with $k$ is the homogeneous coordinate ring of the tangential
variety or the Gauss image of $X$ respectively \cite{SSU}. In the work of Eisenbud, Huneke, and Ulrich \cite{EHU}, they discuss
the definition of the Rees algebra of a module. Simis, Ulrich, and
Vasconcelos study the Rees algebras of modules and their arithmetical
properties in \cite{SUV}. Later in the work of Branco Correia and
Zarzuela \cite{BcZ}, they analyze the relations between the depths
of Rees algebra of a module and Rees algebra of its generic Bourbaki
ideal constructed by Simis, Ulrich, and Vasconcelos (see section 2
for the definition of generic Bourbaki ideal of a module), and use
it to deduce formula for the depth of $\mathcal{R}(E)$ for ideal
modules having small reduction number. In this work, we are interested
in finding the properties of Rees algebra of a module such as Cohen-Macaulaynees,
and of linear type, i.e. the Rees algebra of a module coincides  its
symmetric algebra. 

Let $R$ be a Noetherian local ring with residue field $k$, $E$ be a finite generated $R$-module with a rank $e>0$. We write $\mathcal{R}(E)=\mathcal{S}(E)/T_{R}$
as the Rees algebra of $E$, where 
\[\mathcal{S}(E)=R[t_{1},..,t_{n}]/(t_{1},...,t_{n})\varphi\]
is the symmetric algebra of $E$ with a representation 
\[
R^{m}\begin{array}{c}
\varphi\\
\longrightarrow\\
\\
\end{array}R^{n}\longrightarrow E\longrightarrow0,
\]
and $T_{R}$ is the $R$-torsion submodule of $\mathcal{S}(E)$. 

We write $[\mathcal{R}(E)]_{n}=E^{n}$, the $n$-th graded piece of
$\mathcal{R}(E)$ and $[\mathcal{S}(E)]_{n}=\mathcal{S}_{n}(E)$,
the $n$-the graded piece of $\mathcal{S}(E)$. If $\mathcal{R}(E)=\mathcal{S}(E)$,
we say $E$ is an module of linear type. We write $\mu(*)$ as the
minimal number of generators. Recall that we say $E$ has condition
$G_{s}$ for $s$ a positive integer, if $\mu(E_{p})\leq\mbox{dim}R_{p}+e-1$
whenever $1\leq\mbox{dim}R_{p}\leq s-1$. If $G_{s}$ is satisfied
for every $s$, $E$ is said to satisfy $G_{\infty}$. We write $\mathcal{\ell}(E)$
for the analytic spread of $E$, which is the dimension of $\mathcal{R}(E)\otimes k=\mathcal{F}(E)$,
the special fiber ring of $E$. In general, we have $\mathcal{\ell}(E)\leq d+e-1$\cite{SUV}.
Let $U$ be a submodule of $E$ and if there is an integer $r$ such that $E^{r+1}=UE^{r}$, then we say $U$ is a reduction of $E$. We define $r_{U}(E)$ to be the minimum of $r$ such that $E^{r+1}=UE^{r}$,
where $U$ is a reduction of $E$. Let $r(E)$, the reduction number
of $E$, be the minimum of $r_{U}(E)$ where $U$ ranges over all
the reduction of $E$ that are minimal with respect to inclusion.

When $E$ is an ideal $I$, one can determinate if $\mathcal{R}(I)$
is Cohen-Macaulay by checking the depth condition of $I^{n}$, analytic
spread of $I$, $l(I)$, the reduction number of $I$, $r(I)$, and
the $G_{l}$ condition by the work of Johnson and Ulrich in \cite{JU}
and of Goto, Nakamura and Nishida in \cite{GNN}. It is natural
to think if one can extend the similar results of \cite{JU} and
\cite{GNN} to the case of modules. Simis, Ulrich, and Vasconcelos
define the generic Bourbaki ideal of a module $E$, $I$, and show
the Cohen-Macaulayness of $\mathcal{R}(E)$ and $\mathcal{R}(I)$
are equivalent in \cite{SUV}. One would think it is trivial to
have the Cohen-Macaulayness of $\mathcal{R}(E)$ once $E$ has corresponding
depth conditions of $E^{n}$, $l(E)$, $r(E)$, and the $G_{l}$ condition
as the ideal case. Unfortunately, those assumptions may not be preserved
after passing through the generic Bourbaki ideal of $E$.

We start with setting notations and recalling some definitions and
theorems of the backgrounds of this work in section 2. In the section
3, we establish the sufficient conditions for a Rees algebra of a
module to be Cohen-Macaulay and a module to be of linear type in
Theorem \ref{linear}, where we use the $G_{\infty}$ condition and sliding
depth property to prove the theorem. Later we give a similar favor
of the work of Johnson and Ulrich to the module case in Theorem \ref{CM},
where it depends on if one can pass the depth condition of $E^{n}$
to its quotient (Lemma \ref{torsionFree}).

In section 4, we provide some applications of this work and are able
to extend some of work of \cite{SUV} and \cite{HSV2}(Proposition
\ref{minrank}, and Corollary \ref{CM2} and \ref{CM3}).

\section{Preliminaries}

The following notations and terminologies will be fixed through out this
work. Let $R$ be a Cohen-Macaulay local ring and write $H_{i}(\underbar{x},R)$
as the $i$-th cohomology of an $R$-ideal $I=(\underbar{x})$. If
$\mu(I)=n$ and $\mathrm{depth}H_{i}(\underbar{x};R)\geq d-n+i$ for
all $i$ where $\mbox{dim}R=d$, we say $I$ has sliding depth condition. 
In the case of an $R$-module $E$, we define $SD_{s}$ on $\mathcal{S}(E)$
as $\mathrm{depth}H_{i}(S_{+};\mathcal{S}(E))_{i}\geq d-n+i+s$ for
$i>0$, where $S_{+}$ is the irrelevant ideal of $\mathcal{S}(E)$ and $s$ is an integer.
The unqualified sliding depth condition will refer to the case $s=\mathrm{rank}(E)$.
We write $\mathcal{Z}(E)=M^{*}(S_{+};\mathcal{S}(E))$ the approximation complex
of $E$, where $M^{*}(S_{+};\mathcal{S}(E))$ is the complex defined
in \cite{HSV1}. 

We say a $R$-module $E$ with a rank has condition $\mathcal{F}_{s}$, where $s$ is an integer,
if for each prime $P$ where the localization $E_{P}$ is not a free
module then $\mu(E_{P})\leq\mathrm{rank}(E)+\mathrm{height}P-s$.
If $E$ is an ideal, $\mathcal{F}_{1}$ is the condition $G_{\infty}$.
We say $E$ is an ideal module if $E\neq0$ and torsion-free, and
the double dual $E^{**}$ is free. We call a $R$-module $E$ is reflexive
if $E^{**}=E$. A finitely generated $R$-module $E$ is orientable
if $E$ has a rank $e$ and $(\wedge^{e}E)^{**}\cong R$, where $(-)^{*}$
denotes the functor $\mbox{Hom}_{R}(-,R)$. 

We recall the Artin-Nagata property $AN_{s}$. Let $R$ be a Cohen-Macaulay
local ring, $I$ an $R$-ideal, and $s$ an integer. We say that $I$
satisfies $AN_{s}$ if $R/J:I$ is Cohen-Macaulay for every ideal
$J\subseteq I$ with $\mu(J)\leq i\leq\mathrm{height}J:I$ and every
$i\leq s$. The following proposition and theorem provide conditions
for an ideal to have Artin-Nagata property $AN_{s}$.
\begin{prop}
\label{U1-9}\cite{U} Let $R$ be a local Cohen-Macaulay ring with
dimension $d$, let $I$ be an ideal of $R$ satisfying $G_{\infty}$
and $AN_{d-2}$. Then $I$ satisfies $AN_{s}$ for any $s$ and sliding
depth.

\medskip{}
\end{prop}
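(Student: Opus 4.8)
The plan is to reduce the assertion about $AN_s$ to a single residual-intersection statement and then run an induction on $d=\dim R$, reading off sliding depth along the way. Recall that $AN_s$ says that $R/(J:I)$ is Cohen-Macaulay for every ideal $J\subseteq I$ with $\mu(J)\le i\le\operatorname{height}(J:I)$ and every $i\le s$; such a $K:=J:I$ is an \emph{$i$-residual intersection} of $I$. The hypothesis $AN_{d-2}$ already covers all $i\le d-2$. If $i\ge d$ then $\operatorname{height} K\ge i\ge d=\dim R$, so $K$ is the unit ideal or $\mathfrak m$-primary (in a Cohen-Macaulay local ring a prime of height $d$ is the maximal ideal), whence $R/K$ is $0$ or Artinian, hence Cohen-Macaulay; and for $i>d$ there is no such pair $(J,i)$ at all. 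So the only content of ``$AN_s$ for all $s$'' beyond the hypothesis is the case $i=d-1$, and there the subcase $\operatorname{height} K=d$ is again trivial, so the real point is: \emph{if $K=J:I$ with $\mu(J)\le d-1$ and $\operatorname{height} K=d-1$, then $\operatorname{depth} R/K\ge 1$}.

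I would prove this last statement together with the sliding depth conclusion by induction on $d$, the small cases being direct. For the inductive step, first pass to the faithfully flat local extension $R\to R'=R(\mathbf u)$ obtained by adjoining indeterminates; this changes nothing (all of $G_\infty$, $AN_{d-2}$, Cohen-Macaulayness, and the desired conclusions descend faithfully flatly) but provides ``enough'' general elements of $IR'$. Using $G_\infty$, choose a general element $x\in I$ so that $x$ is a nonzerodivisor, $R/(x)$ is Cohen-Macaulay of dimension $d-1$, the image $\bar I$ of $I$ in $R/(x)$ again satisfies $G_\infty$, and $\bar I$ inherits $AN_{d-3}$ from the $AN_{d-2}$ of $I$ (a $j$-residual intersection of $\bar I$ with $j\le d-3$ lifts to a $(j+1)$-residual intersection of $I$ upon adjoining $x$ to the defining ideal). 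For a prescribed $(d-1)$-residual intersection $K=J:I$, arrange moreover that $x\in J$, so that $\bar K=\bar J:\bar I$ is a $(d-2)$-residual intersection of $\bar I$ with $\operatorname{height}\bar K=\operatorname{height} K-1$. By induction $\bar I$ satisfies $AN_t$ for all $t$; in particular $R/K=(R/(x))/\bar K$ is Cohen-Macaulay of dimension $1$, which gives $AN_s$ for all $s$.

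For the sliding depth conclusion I would transfer it across the general hyperplane section: with $\underline x$ a minimal generating set of $I$, generality of $x$ forces the Koszul homology $H_i(\underline x;R)$ to sit in the expected long exact sequences relating it to the Koszul homology of $\bar I$ over $R/(x)$, and the depth estimates defining the (unqualified) sliding depth condition $SD_{\operatorname{rank}(I)}$ then lift from dimension $d-1$ by adding back the regular element $x$. Equivalently, and more cleanly: once $G_\infty$ together with Cohen-Macaulayness of all residual intersections of $I$ is established, one invokes the criterion of Herzog-Simis-Vasconcelos and Huneke, which identifies these hypotheses with the acyclicity of the approximation complex $\mathcal Z(I)=M^*(S_+;\mathcal S(I))$, and acyclicity of $\mathcal Z(I)$ is in turn equivalent to sliding depth.

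The main obstacle is the general-element reduction itself: one must verify that a single general $x\in I$ (after the harmless extension to $R'$) is simultaneously a nonzerodivisor, drops the height of every relevant colon ideal by exactly one, keeps $G_\infty$ for $\bar I\subseteq R/(x)$, and can be absorbed into the generating set $J$ of a prescribed residual intersection without violating $\mu(J)\le i$; and one must check that the homological conclusions descend through $R\to R'$ and lift through $R\to R/(x)$. Keeping precise track of the correspondence between residual intersections of $\bar I$ and of $I$, and the boundary bookkeeping for the number of generators, is the delicate part. By contrast, the cases $i\ge d$ and the translation between ``all residual intersections Cohen-Macaulay'' and ``$\mathcal Z(I)$ acyclic / sliding depth'' are essentially formal.
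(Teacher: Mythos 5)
This proposition is quoted from \cite{U}; the paper gives no proof of its own. The standard argument is short but rests on a substantive converse from \cite{HVV}: for a $G_\infty$ ideal in a Cohen--Macaulay local ring, sliding depth is \emph{detected} by the Cohen--Macaulayness of the residual intersections of suitably generic generating sequences, which $AN_{d-2}$ supplies; sliding depth together with $G_\infty$ then returns $AN_s$ for every $s$ via Theorem~\ref{GandSliding}. Your opening reduction (only the case $i=d-1$ with $\operatorname{height}(J:I)=d-1$ carries content beyond the hypothesis) is correct, but the two substantive steps of your plan each have a genuine gap.

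First, the hyperplane-section induction breaks exactly at the case that matters. For the inductive hypothesis to apply to $\bar I=I/(x)\subseteq R/(x)$ you need $\bar I$ to satisfy $G_\infty$, which forces $x$ to be a minimal generator of $I_P$ at every prime $P$ where $\mu(I_P)=\operatorname{height}P$; a general element of $I$ over $R(\underline u)$ does this, but a general element of a \emph{prescribed} $J$ need not. Indeed $J$ can lie entirely inside $\mathfrak m I$ while still defining a $(d-1)$-residual intersection (already $J=(x^2+y^2)\subseteq\mathfrak m^2$ with $I=\mathfrak m$ in $k[[x,y]]$), and then no element of $J$ is a minimal generator of $I$ anywhere, so ``arrange moreover that $x\in J$'' is incompatible with the genericity you need. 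The treatment of arbitrary residual intersections in \cite{U} avoids this by a separate lemma: using the Artin--Nagata properties already in hand, any $i$-residual intersection $J:I$ admits a generating sequence $a_1,\dots,a_i$ of $J$ whose truncations $(a_1,\dots,a_j):I$ are again (geometric) residual intersections, and one inducts on $j$ rather than on $\dim R$. That lemma is the missing idea here.

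Second, the sliding depth step is not formal. The claimed equivalence ``$\mathcal Z(I)$ acyclic $\Leftrightarrow$ sliding depth'' is false as stated: by Theorem~\ref{HSV6.2}, acyclicity of $\mathcal Z$ \emph{together with} Cohen--Macaulayness of $\mathcal S$ is equivalent to sliding depth plus $\mathcal F_0$, and acyclicity alone is strictly weaker. Moreover Theorem~\ref{GandSliding} (from \cite{H}, \cite{HSV1}) runs from sliding depth to $AN_s$, the opposite direction from the one you need; the implication ``residual intersections Cohen--Macaulay $\Rightarrow$ sliding depth'' is a theorem of \cite{HVV}, not a translation. Finally, transferring the Koszul depth estimates across $R\to R/(x)$ requires $x$ to be regular on the nonvanishing $H_i(\underline a;R)$, which is itself a depth assertion about those modules of the very kind being proved, so that route is circular without additional input.
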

\begin{thm}
\cite{H}\cite{HSV1}\label{GandSliding} Let $R$ be a local
Cohen-Macaulay ring, $I$ be an ideal of $R$ satisfying $G_{s}$
and sliding depth. Then $I$ satisfies $AN_{s}$.
\end{thm}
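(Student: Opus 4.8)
The plan is to argue by induction on $s$, the case $s=0$ being trivial. So assume $AN_{s-1}$ has been established for ideals satisfying $G_{s-1}$ and sliding depth, and let $I$ satisfy $G_{s}$ and sliding depth. Unwinding the definition, it suffices to prove that $R/(J:I)$ is Cohen--Macaulay whenever $J\subseteq I$, $\mu(J)\le s$, and $\mathrm{ht}(J:I)\ge\mu(J)$ (taking $i=\mu(J)$). If $\mu(J)\le s-1$ this is immediate from $AN_{s-1}$, since $G_{s}\Rightarrow G_{s-1}$ and $I$ still has sliding depth; so the only remaining case is $\mu(J)=s$ and $\mathrm{ht}(J:I)\ge s$, i.e.\ $J:I$ is a proper $s$-residual intersection of $I$.

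Next I would reduce to a \emph{generic} residual intersection. Fix generators $a_{1},\dots,a_{m}$ of $I$, adjoin $sm$ indeterminates $z_{j\ell}$, localize at the extension of the maximal ideal to obtain a faithfully flat local extension $R'$ with regular fibres, put $I'=IR'$, and set $y_{j}=\sum_{\ell=1}^{m}z_{j\ell}a_{\ell}\in I'$ and $J'=(y_{1},\dots,y_{s})$. Faithfully flat descent along $R\to R'$ preserves and reflects Cohen--Macaulayness, the conditions $G_{s}$ and sliding depth, heights, and minimal numbers of generators. The given $J$, whose $s$ generators are $R$-linear combinations of the $a_{\ell}$, is obtained from $J'$ by specializing the $z_{j\ell}$ to those coefficients; once the generic residual intersection $J':I'$ has been shown to be Cohen--Macaulay of height $s$ (which the homological argument below establishes), it follows by a standard if delicate specialization argument---comparing the Koszul complex on $y_{1},\dots,y_{s}$ before and after setting the $z_{j\ell}$ equal to the coefficients---that $R/(J:I)$ is Cohen--Macaulay as well. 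Thus it suffices to treat the case where $J$ is generated by $s$ sufficiently general elements of $I$.

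For such a generic $J=(y_{1},\dots,y_{s})$, the condition $G_{s}$ forces $y_{1},\dots,y_{s}$ to be a $d$-sequence contained in $I$ with $\mathrm{ht}\big((y_{1},\dots,y_{j}):I\big)\ge j$ and $(y_{1},\dots,y_{j}):I=(y_{1},\dots,y_{j}):I^{2}$ for each $j\le s$. Now comes the homological core: sliding depth makes the approximation complex $\mathcal{Z}(I)$ acyclic \cite{HSV1}, equivalently it forces the Koszul homology modules $H_{i}(\underline{x};R)$ to have the required depths; feeding this into an induction on $j$ that links $R/\big((y_{1},\dots,y_{j}):I\big)$ to $R/\big((y_{1},\dots,y_{j-1}):I\big)$ through the Koszul complex on $y_{1},\dots,y_{j}$ (equivalently, through the specialization of $\mathcal{Z}(I)$ modulo the $y$'s) yields $\mathrm{depth}\,R/(J:I)\ge d-s$. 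Since $\mathrm{ht}(J:I)\ge s$ forces $\dim R/(J:I)\le d-s$, we obtain $\mathrm{depth}=\dim=d-s$, so $R/(J:I)$ is Cohen--Macaulay. The localizations needed to upgrade this ``maximal depth at the maximal ideal'' conclusion to genuine Cohen--Macaulayness are handled by a downward induction on $\dim R$, localizing at primes containing $J:I$, which preserves all the hypotheses.

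I expect the main obstacle to be precisely this homological step under the weak hypothesis of sliding depth: in Huneke's original treatment the Koszul homology of $I$ is strongly Cohen--Macaulay, which makes the iterated mapping-cone/spectral-sequence bookkeeping transparent, whereas here one must extract the same depth conclusion from the bounds $\mathrm{depth}\,H_{i}(\underline{x};R)\ge d-\mu(I)+i$ alone, while simultaneously verifying that the generic residual intersection genuinely specializes to the given one without loss of Cohen--Macaulayness. The remaining ingredients ($G_{s}\Rightarrow G_{s-1}$, faithfully flat descent, and the reduction to the proper generic case) are routine.
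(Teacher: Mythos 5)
The first thing to say is that the paper contains no proof of this statement: it is quoted as background from Huneke \cite{H} and Herzog--Simis--Vasconcelos \cite{HSV1} (the sliding-depth version is in Herzog--Vasconcelos--Villarreal \cite{HVV}), so there is no in-paper argument to compare yours against; I can only measure your proposal against the proofs in those sources.

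Your skeleton --- reduce to the top case $\mu(J)=s$ with $\mathrm{ht}(J:I)\geq s$, choose the generators of $J$ generically, and run an induction on $j$ controlling $\mathrm{depth}\,R/\big((y_{1},\dots,y_{j}):I\big)$ via Koszul homology --- is indeed the shape of the argument in \cite{H} and \cite{HVV}. But two steps are genuinely defective. First, your reduction to a generic residual intersection replaces $J$ by an ideal $J'$ generated by generic combinations of generators of $I$, and then asks to ``specialize back'' to the given $J$. That specialization is not a standard step and does not work as described: Cohen--Macaulayness of $R'/(J':I')$ does not descend to $R/(J:I)$ under an arbitrary specialization of the $z_{j\ell}$, since colon ideals do not commute with specialization and the specialized elements need not form a regular sequence on the relevant modules --- which is essentially what one is trying to prove. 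The correct move, which avoids specialization entirely, is to take generic $R'$-linear combinations of a fixed generating set of $J$ itself; these generate the same ideal (the generic coefficient matrix is invertible over the local ring), and $G_{s}$ plus prime avoidance then gives $\mathrm{ht}\big((y_{1},\dots,y_{j}):I\big)\geq j$ for every $j\leq s$. Second, the homological core is asserted rather than proved, and the auxiliary claims propping it up are not correct as stated: $G_{s}$ alone does not force $y_{1},\dots,y_{s}$ to be a $d$-sequence, nor does it give $(y_{1},\dots,y_{j}):I=(y_{1},\dots,y_{j}):I^{2}$ --- in \cite{HVV} and in Ulrich's account \cite{U} these identities are part of the inductive conclusion, established simultaneously with the Cohen--Macaulayness of each $R/\big((y_{1},\dots,y_{j}):I\big)$; likewise sliding depth alone does not make $\mathcal{Z}(I)$ acyclic (compare Theorem \ref{HSV6.2}, which also requires $\mathcal{F}_{0}$). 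So the proposal correctly locates the difficulty but does not close it, and the route chosen to reach that difficulty has a gap of its own.
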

We recall the definition of the generic Bourbaki ideal of a module
$E$. We write $R(Z)=R[Z]_{mR[Z]},$ where $Z$ is a set of indeterminates
over $R$. Let $U=\sum_{i=1}^{n}Ra_{i}\subset E$ and $E/U$ a torsion
$R$-module. Let $z_{ij}$, $1\leq i\leq n$, $1\leq j\leq e-1$ be
indeterminates and $R''=R(\{z_{ij}\})$, $U''=R''\otimes U$, $E''=R''\otimes E$,
$x_{j}=\sum_{i=1}^{n}z_{ij}a_{i}\in E''$, $F=\sum_{j=1}^{e-1}Rx_{j}$
which is free $R''$-module of rank $e-1$. Assume $E''/F$ is $R''$-torsion
free then $E''/F\simeq I$ for some $R''$-ideal $I$ with grade$I>0$.
$I$ is called a generic Bourbaki ideal of $E$ with respect to $U$.
If $U=E$, we write $I=I(E)$, the generic Bourbaki ideal of $E$.
Note: $I$ is not unique. The following theorem and lemma show that
some properties of the Rees algebra of a module $E$ and the Rees algebra
of its generic Bourbaki ideal are equivalent.
\begin{thm}
\label{EtoI}\cite{SUV} Let $(R,m)$ be a Noetherian local ring,
$E$ a finite generated $R$-module with $\mathrm{rank}E=e>0$. Let
$I\simeq E''/F$ be a generic Bourbaki ideal of $E$. 
\begin{enumerate}
\item
\begin{enumerate}
\item
$\mathcal{R}(E)$ is Cohen-Macaulay if and only if $\mathcal{R}(I)$
is Cohen-Macaulay.
\item 
$E$ is of linear type with $\mathrm{grade}\mathcal{R}(E)_{+}\geq e$
if and only if $I$ is of linear type.
\end{enumerate}
\item If any of conditions (a), (b) of (1) hold, then $\mathcal{R}(E'')/F\cong\mathcal{R}(I)$
and $x_{1},...,x_{e-1}$ form a regular sequence on $\mathcal{R}(E'')$.
\end{enumerate}
\end{thm}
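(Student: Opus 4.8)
The plan is to prove (2) first, by reducing to a single ``generic Bourbaki element'' and iterating $e-1$ times, and then to read off (1). Write $d=\dim R$. First I would reduce to the generic extension: since $R''=R(\{z_{ij}\})$ is a localization of a polynomial ring over $R$, the map $R\to R''$ is faithfully flat with regular, hence Cohen--Macaulay, fibres; both $\mathcal{S}(-)$ and the formation of the torsion submodule commute with flat base change, so $\mathcal{R}(E'')\cong\mathcal{R}(E)\otimes_R R''$ as graded $R''$-algebras. Hence $\mathcal{R}(E)$ is Cohen--Macaulay iff $\mathcal{R}(E'')$ is, $E$ is of linear type iff $E''$ is, and $\mathrm{grade}\,\mathcal{R}(E)_+=\mathrm{grade}\,\mathcal{R}(E'')_+$, so it suffices to compare $\mathcal{R}(E'')$ with $\mathcal{R}(I)$. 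I will also use that $\dim\mathcal{R}(E'')=d+e$, that $\mathcal{R}(E'')_+$ has height $e$, and that $\dim\mathcal{R}(I)=d+1$ (as $I$ has positive grade); in particular $\mathcal{R}(E'')$ Cohen--Macaulay forces $\mathrm{grade}\,\mathcal{R}(E'')_+=e$, the bound also hypothesized in (1)(b).

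\emph{The comparison map and the one-element lemma.} Applying the right-exact functor $\mathcal{S}(-)$ to $E''\twoheadrightarrow E''/F\cong I\hookrightarrow R''$ and passing to images in $R''[T]$ gives a graded surjection $\pi\colon\mathcal{R}(E'')\twoheadrightarrow\mathcal{R}(I)$ that kills $x_1,\dots,x_{e-1}$, hence a surjection $\bar\pi\colon\mathcal{R}(E'')/(x_1,\dots,x_{e-1})\twoheadrightarrow\mathcal{R}(I)$. The engine is a one-element lemma: if $U=\sum_{i=1}^n Ra_i\subseteq E$, $a=\sum_i z_ia_i$ is a generic element over $R(z):=R(\{z_i\})$, $E(z):=E\otimes_R R(z)$, the quotient $E(z)/Ra$ is torsion-free, and $\mathrm{grade}\,\mathcal{R}(E)_+\ge1$, then $a$ is a nonzerodivisor on $\mathcal{R}(E(z))$ and $\mathcal{R}(E(z))/a\,\mathcal{R}(E(z))\cong\mathcal{R}(E(z)/Ra)$. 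For the first assertion: the associated primes of $\mathcal{R}(E(z))=\mathcal{R}(E)\otimes_R R(z)$ are extended from those of $\mathcal{R}(E)$, none contains $\mathcal{R}(E)_+=(a_1,\dots,a_n)$ by the grade hypothesis, and comparing coefficients of the $z_i$ then keeps the generic combination $a$ outside all of them. For the isomorphism: right exactness gives $\mathcal{S}(E(z))/a\,\mathcal{S}(E(z))=\mathcal{S}(E(z)/Ra)$, so one always has a surjection $\mathcal{R}(E(z))/a\,\mathcal{R}(E(z))\twoheadrightarrow\mathcal{R}(E(z)/Ra)$ whose kernel is the $R(z)$-torsion of the left side; thus the lemma amounts to the claim that $\mathcal{R}(E(z))/a\,\mathcal{R}(E(z))$ is again $R(z)$-torsion-free. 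This is exactly where genericity enters, since over the free locus of $E$ the element $a$ spans a free direct summand, so the quotient is a polynomial extension there, and one must propagate torsion-freeness across the (positive-grade) non-free locus.

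\emph{Iteration and deduction of (1).} The partial quotients $E^{(j)}:=E''/(Rx_1+\cdots+Rx_j)$ are torsion-free --- a torsion element maps to $0$ in the torsion-free $E''/F$, hence lies in the free module $F/(Rx_1+\cdots+Rx_j)$, hence vanishes --- and have rank $e-j$, and $\mathrm{grade}\,\mathcal{R}(E^{(j)})_+=e-j\ge1$ for $j\le e-1$ once $\mathrm{grade}\,\mathcal{R}(E'')_+\ge e$ (using the regular sequence built so far). Applying the one-element lemma $e-1$ times, with $a=x_{j+1}$ at the $j$-th stage, yields that $x_1,\dots,x_{e-1}$ is a regular sequence on $\mathcal{R}(E'')$ and $\mathcal{R}(E'')/(x_1,\dots,x_{e-1})\cong\mathcal{R}(E''/F)=\mathcal{R}(I)$, which is (2). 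Given (2), part (1)(a) follows: $\mathcal{R}(E)$ is Cohen--Macaulay iff $\mathcal{R}(E'')$ is iff $\mathcal{R}(E'')/(x_1,\dots,x_{e-1})$ is (modding out a regular sequence) iff $\mathcal{R}(I)$ is; for the direction starting from $\mathcal{R}(I)$ Cohen--Macaulay one first uses $\pi$ and the dimension count $\dim\mathcal{R}(E'')=d+e=\dim\mathcal{R}(I)+(e-1)$ to see $\mathrm{height}\,(x_1,\dots,x_{e-1})\mathcal{R}(E'')=e-1$ and thereby recover the grade bound needed to run the iteration. For (1)(b) one argues in parallel with $\mathcal{S}$ in place of $\mathcal{R}$: $E$ of linear type gives $\mathcal{R}(E'')=\mathcal{S}(E'')$, so $\mathcal{R}(E'')/(x_1,\dots,x_{e-1})=\mathcal{S}(E''/F)=\mathcal{S}(I)$ by right exactness, which is $\mathcal{R}(I)$ exactly when $I$ is of linear type, the grade hypothesis again supplying the regular sequence and the converse following from the analogous torsion control for $\mathcal{S}(E'')$.

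\emph{Main obstacle.} The delicate point is the isomorphism $\mathcal{R}(E'')/(x_1,\dots,x_{e-1})\cong\mathcal{R}(I)$, i.e.\ the injectivity of $\bar\pi$: unlike the symmetric algebra, the Rees algebra is not right exact, so killing the $e-1$ generic degree-one elements might a priori leave spurious $R''$-torsion. Pinning this down is what forces the genuine use of the genericity of the Bourbaki data --- the sequence $0\to F\to E''\to I\to0$ splits over the free locus of $E''$, making $\bar\pi$ an isomorphism there, and the grade hypothesis (automatic when $\mathcal{R}(E)$ is Cohen--Macaulay, assumed in the linear-type case) is precisely what lets one push that isomorphism across the non-free locus, whose defining Fitting ideal has positive codimension. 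Carrying $\mathcal{R}(-)$, rather than merely $\mathcal{S}(-)$, exactly through the short exact sequence at each stage of the induction is the technical heart of the argument.
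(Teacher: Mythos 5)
First, note that the paper offers no proof of this statement: it is quoted verbatim from \cite{SUV} (where it is the main theorem on generic Bourbaki ideals), so your proposal can only be compared with the argument in that source. Your skeleton does match it --- flat base change to $R''$, a one-generic-element deformation step, induction on the rank $e$, and the dimension count $\dim\mathcal{R}(E'')=d+e$. The genuine gap is that your ``one-element lemma'' is both unproven and stated with hypotheses that do not suffice. The regularity of the generic element $a$ on $\mathcal{R}(E(z))$ from $\mathrm{grade}\,\mathcal{R}(E)_+\ge 1$ is fine (prime avoidance on the finitely many associated primes, none of which contains $U\mathcal{R}(E)$ since $U$ is a reduction). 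But the assertion that $\mathcal{R}(E(z))/a\mathcal{R}(E(z))$ is again $R(z)$-torsion-free does \emph{not} follow from $\mathrm{grade}\,\mathcal{R}(E)_+\ge 1$ together with torsion-freeness of $E(z)/R(z)a$; you correctly identify this as ``the technical heart'' and then do not supply it. In \cite{SUV} this deformation statement is proved separately under each of the hypotheses actually present in the theorem, by different mechanisms: when $\mathcal{R}(E)$ is Cohen--Macaulay, the kernel of $\mathcal{R}(E'')/(x)\twoheadrightarrow\mathcal{R}(\overline{E})$ is an $R''$-torsion module, hence supported in dimension $<d+e-1$, and it must vanish because a Cohen--Macaulay ring is unmixed with no embedded primes; when $E$ is of linear type with $\mathrm{grade}\,\mathcal{R}(E)_+\ge e$, one works with $\mathcal{S}(-)$ and controls the torsion of the successive quotients by a separate grade argument. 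Neither argument appears in your write-up, and the same omission undermines (1)(b): exhibiting $\mathcal{S}(I)$ as $\mathcal{S}(E'')$ modulo a regular sequence does not by itself make $\mathcal{S}(I)$ torsion-free, which is exactly what ``$I$ is of linear type'' means.

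There is a second, independent problem in your deduction of the upward implication of (1)(a). Starting from $\mathcal{R}(I)$ Cohen--Macaulay, you propose to ``recover the grade bound'' from $\mathrm{height}\,(x_1,\dots,x_{e-1})\mathcal{R}(E'')=e-1$; but height bounds grade only in a Cohen--Macaulay ring, and the Cohen--Macaulayness of $\mathcal{R}(E'')$ is precisely the conclusion you are trying to reach, so this step is circular. In \cite{SUV} the isomorphism $\mathcal{R}(E'')/(x_1,\dots,x_{e-1})\cong\mathcal{R}(I)$ and the regular-sequence property are established in this direction without assuming $\mathcal{R}(E'')$ is Cohen--Macaulay (using the genericity of the $x_i$ and the structure of $\mathrm{Ass}\,\mathcal{R}(E'')$ at each stage of the induction), after which Cohen--Macaulayness lifts along the regular sequence. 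So the overall architecture of your proposal is the right one, but the two load-bearing steps --- the torsion-freeness of the quotient at each stage, and the non-circular treatment of the converse direction --- are missing.
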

\begin{lem}
\label{G_l}\cite{SUV} Let $(R,m)$ be a Noetherian local ring,
$E$ be a finite generated $R$-module with $\mathrm{rank}E=e\geq2$.
$E$ satisfies $G_{l}$. Then (a)
$\mathcal{\ell}(\overline{E})=\mathcal{\mathcal{\ell}}(E)-1$, (b)
$r(\bar{E})\leq r(E)$ (c) $\bar{E}$ satisfies $G_{l}$, where $\overline{E}=E''/(x)$.\end{lem}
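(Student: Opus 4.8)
The plan is to reduce the assertion to the effect of dividing $E''$ by a single generic element, because applying that step $e-1$ times recovers the passage from $E$ to its generic Bourbaki ideal. Replacing $R$ by $R(t)$ if necessary we may assume the residue field infinite, which changes none of the invariants occurring in the statement. Write $R''=R(\{z_{ij}\})$ as in the construction, let $x=x_1=\sum_{i=1}^{n}z_{i1}a_i\in E''$, and set $\overline E=E''/R''x$. The map $R\to R''$ is faithfully flat and local, its primes are extended from $R$ with unchanged height, $\mathcal R(E'')\cong\mathcal R(E)\otimes_R R''$ and $\mathcal F(E'')\cong\mathcal F(E)\otimes_k k''$ with $k''=k(\{z_{ij}\})$; consequently $E''$ satisfies $G_l$, $\ell(E'')=\ell(E)$ and $r(E'')=r(E)$, so it suffices to prove that $\operatorname{rank}\overline E=e-1$, that $\overline E$ satisfies $G_l$, that $\ell(\overline E)=\ell(E'')-1$, and that $r(\overline E)\le r(E'')$.

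\emph{The condition $G_l$ and the rank.} Let $P\in\operatorname{Spec}R''$ and put $\mathfrak p=P\cap R$. Since $\operatorname{rank}E=e\ge1$ we have $E_{\mathfrak p}\ne\mathfrak p E_{\mathfrak p}$, so some $a_i$ has nonzero image in $E_{\mathfrak p}/\mathfrak p E_{\mathfrak p}$; as the $z_{i1}$ are algebraically independent over $\kappa(\mathfrak p)$ inside $\kappa(P)=\kappa(\mathfrak p)(\{z_{ij}\})$, the image of $x$ in $E''_P/PE''_P$ is a nonzero $\kappa(P)$-linear combination of the images of the $a_i$, hence $x\notin PE''_P$; that is, $x$ is a minimal generator of $E''_P$. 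Reading this at minimal primes gives $\operatorname{rank}\overline E=e-1$. Assume in addition $1\le\dim R''_P\le l-1$. If $E''_P$ is not free, then $\mu(\overline E_P)=\mu(E''_P)-1\le\dim R''_P+e-2$ by $G_l$ for $E''$; if $E''_P$ is free, necessarily of rank $e$, then $x$ is part of a basis, so $\overline E_P$ is free of rank $e-1\le\dim R''_P+e-2$. In either case $\mu(\overline E_P)\le\dim R''_P+(e-1)-1$, i.e. $\overline E$ satisfies $G_l$.

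\emph{The analytic spread and the reduction number.} The surjection $E''\twoheadrightarrow\overline E$ induces a surjection of standard graded $k''$-algebras $\mathcal F(E'')\twoheadrightarrow\mathcal F(\overline E)$ killing the image $\bar x=\sum_i z_{i1}\bar a_i\in[\mathcal F(E'')]_1$. Since $\mathcal F(E'')=\mathcal F(E)\otimes_k k''$ has its minimal primes extended from $\mathcal F(E)$, and none of them contains $[\mathcal F(E'')]_1$ (else it would contain the irrelevant ideal), the generic element $\bar x$ avoids every minimal prime of $\mathcal F(E'')$; as $\ell(E'')>0$ this forces $\dim\mathcal F(E'')/\bar x\mathcal F(E'')=\ell(E'')-1$, whence $\ell(\overline E)\le\ell(E'')-1$. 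For the reverse inequality, lift a minimal reduction $V=(\bar v_1,\dots,\bar v_s)$ of $\overline E$, with $s=\ell(\overline E)$, to $\widetilde V=(v_1,\dots,v_s)\subseteq E''$; for each generator $e$ of $E''$, integrality of $\overline E$ over $V$ yields $e^m+\tilde c_1 e^{m-1}+\dots+\tilde c_m\in\ker\!\big(\mathcal R(E'')\twoheadrightarrow\mathcal R(\overline E)\big)$ with $\tilde c_i\in\widetilde V^{\,i}$, and once that kernel is known to equal $x\mathcal R(E'')$ in degree $m$, the left-hand term lies in $(R''x)\mathcal R(E'')_{m-1}$, so $e^m\in(\widetilde V+R''x)\mathcal R(E'')_{m-1}$; hence $\widetilde V+R''x$ is a reduction of $E''$ and $\ell(E'')\le\mu(\widetilde V+R''x)\le s+1$, giving $\ell(\overline E)=\ell(E'')-1$. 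Finally, over the infinite field $k''$ the set of tuples generating a minimal reduction of $E''$ with reduction number $\le r(E'')$ is a nonempty Zariski-open, hence dense, subset of an affine space — surjectivity of a linearly varying map of finite dimensional vector spaces is an open condition, and $r(E'')$ is attained — so, $x$ having generic coefficients, it can be completed to such a tuple; thus $x$ lies in a minimal reduction $W$ of $E''$ with $r_W(E'')=r(E'')$. As $x$ is a minimal generator of $E''$ it is a minimal generator of $W$, so the image $\overline W$ of $W$ in $\overline E$ is minimally generated by $\mu(W)-1=\ell(E'')-1=\ell(\overline E)$ elements, hence is a minimal reduction of $\overline E$, and $r(\overline E)\le r_{\overline W}(\overline E)\le r_W(E'')=r(E'')$.

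\emph{The main obstacle.} The only step that is not formal is the identification of $\ker\!\big(\mathcal R(E'')\twoheadrightarrow\mathcal R(\overline E)\big)$ with $x\mathcal R(E'')$ in large degrees — equivalently, the vanishing in large degrees of the $R''$-torsion of $\mathcal R(E'')/x\mathcal R(E'')$, since in general one only has $\mathcal R(\overline E)=\big(\mathcal R(E'')/x\mathcal R(E'')\big)/(R''\text{-torsion})$. After reducing to the case $R''$ a domain, $\mathcal R(E'')$ is a domain and $x$ is a nonzerodivisor on it, so what remains is to show that the torsion created by passing to $\mathcal R(E'')/x\mathcal R(E'')$ is supported in codimension large enough not to affect $\dim\mathcal R(E'')/x\mathcal R(E'')$ nor $\dim\mathcal F(\overline E)$; this is exactly the point at which the genericity of $x$, together with $G_l$, is essential, and it is the technical heart of the lemma — the same phenomenon which, under the stronger hypotheses of Theorem~\ref{EtoI}, is upgraded to an isomorphism in all degrees.
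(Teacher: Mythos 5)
The paper does not actually prove this lemma: it is quoted verbatim from \cite{SUV} (it is essentially Proposition~3.3 there), so there is no in-paper argument to compare yours against. Judged on its own terms, your proposal has the right architecture (reduce to killing one generic element, transfer $G_l$ by showing $x$ is a minimal generator everywhere, compare fiber rings for $\ell$, push a good minimal reduction forward for $r$), but it is not a proof, for two reasons.

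First, the step you yourself flag is a genuine gap, not a footnote. Everything that makes the lemma nontrivial is concentrated in the reverse inequality $\ell(\overline E)\ge\ell(E'')-1$ and in the lifting of reductions, and both reduce to controlling the $R''$-torsion of $\mathcal R(E'')/x\mathcal R(E'')$ in high degrees (equivalently, identifying $\ker(\mathcal R(E'')\twoheadrightarrow\mathcal R(\overline E))$ with $x\mathcal R(E'')$ there). Without that, ``$e^m+\tilde c_1e^{m-1}+\cdots+\tilde c_m$ lies in the kernel'' gives you $e^m\in\widetilde V\mathcal R(E'')_{m-1}+K_m$ with $K_m$ possibly strictly larger than $x\mathcal R(E'')_{m-1}$, and the conclusion that $\widetilde V+R''x$ is a reduction of $E''$ does not follow. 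Since part (b) also leans on part (a) (to know $\overline W$ is a \emph{minimal} reduction) and on completing $x$ to a reduction realizing $r(E'')$ (where you must be careful that the relevant open locus is defined over a subfield of $k''$ not involving the $z_{i1}$, so that $(z_{11},\dots,z_{n1})$ is genuinely generic for it), the only part that survives intact is (c). This torsion control is exactly what \cite{SUV} supply via their analysis of generic Bourbaki ideals, and it cannot be waved away as ``formal.''

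Second, your transfer of $G_l$ rests on the assertion that every prime of $R''=R[Z]_{mR[Z]}$ is extended from $R$, so that $\kappa(P)=\kappa(\mathfrak p)(\{z_{ij}\})$. That is false: for $R=k[s,t]_{(s,t)}$ the prime $(sz-t)$ is contained in $mR[Z]$ but contracts to $(0)$, and the image of $z$ in its residue field is $t/s$, not transcendental over $\kappa(\mathfrak p)$. The conclusion you want --- that $x\notin PE''_P$ for the relevant primes $P$ --- is still true, but it requires an actual genericity argument (the locus where $x\otimes\kappa(P)=0$ has fibers of codimension $\mu(E_{\mathfrak p})\ge e$ over each $\mathfrak p$ and one must check it misses $\operatorname{Spec}R''$), not the field-extension shortcut. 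This one is fixable; the torsion gap is the real missing content.
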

\begin{prop}
\cite{SUV}\label{S-U-V 5.3} Let $R$ be a Cohen-Macaulay local ring
and $E$ be an ideal module. Let $U$ be a submodule of E. Assume that $\mathrm{grade}E/U\geq s\geq2$
and that $E$ is free locally in codimension $s-1$. Further let $I$ be
a generic Bourbaki ideal of $E$ with respect to $U$. Then $I$ satisfies
$G_{s}$ and $AN_{s-1}$.
\end{prop}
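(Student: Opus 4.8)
The plan is to verify the two properties separately: $G_{s}$ by a localization argument that exploits the genericity of the $x_{j}$, and $AN_{s-1}$ by reducing, through the Bourbaki construction, to the case of an ideal, where the statement becomes one about residual intersections.

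For $G_{s}$: fix $Q\in\mathrm{Spec}\,R''$ with $1\le\mathrm{height}\,Q\le s-1$ and put $P=Q\cap R$, so $\mathrm{height}\,P\le s-1$. Since $E$ is free locally in codimension $s-1$, $E_{P}$ is $R_{P}$-free, hence $E''_{Q}$ is $R''_{Q}$-free of rank $e$. As the $z_{ij}$ are indeterminates, the images of $x_{1},\dots,x_{e-1}$ in $E''_{Q}\otimes k(Q)$ are linearly independent, so by Nakayama they extend to a basis of $E''_{Q}$; therefore $I_{Q}\cong(E''/F)_{Q}$ is $R''_{Q}$-free of rank $1$ and $\mu(I_{Q})=1\le\mathrm{height}\,Q$. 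Thus $I$ satisfies $G_{s}$. The same computation shows $I_{Q}=R''_{Q}$ whenever $\mathrm{height}\,Q<s$, so in fact $\mathrm{grade}\,I\ge s$ and $I$ is a complete intersection locally in codimension $s-1$; these facts are used below.

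For $AN_{s-1}$ I would induct on $e=\mathrm{rank}\,E$. When $e\ge 2$, set $\overline E=E''/R''x_{1}$ and $\overline U=(U''+R''x_{1})/R''x_{1}\subseteq\overline E$. Then $\overline E$ is again an ideal module over $R''$, of rank $e-1$ (see \cite{SUV}); it is free locally in codimension $s-1$ over $R''$ by the localization above, since now $x_{1}$ is part of a local basis; and $\mathrm{grade}_{R''}(\overline E/\overline U)\ge\mathrm{grade}_{R''}(E''/U'')=\mathrm{grade}_{R}(E/U)\ge s$, because $\overline E/\overline U\cong E''/(U''+R''x_{1})$ is a quotient of $E''/U''$ and grade does not decrease under passage to a quotient module. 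Moreover, by \cite{SUV}, a generic Bourbaki ideal of $\overline E$ with respect to $\overline U$ may be taken to be $I$ (up to a faithfully flat local base change, which preserves $AN_{s-1}$), so by the inductive hypothesis $I$ satisfies $AN_{s-1}$. This leaves the case $e=1$, in which $F=0$ and $I\simeq E''$; here one must show that an ideal of the Cohen--Macaulay ring $R''$ that has grade $\ge s$, is a complete intersection locally in codimension $s-1$, and comes from the generic construction, satisfies $AN_{s-1}$.

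This base case is the heart of the matter and, I expect, the main obstacle. Given $J\subseteq I$ with $\mu(J)\le i\le\mathrm{height}(J:I)$ and $i\le s-1$, one must show $R''/(J:I)$ is Cohen--Macaulay. At a minimal prime $Q$ of $J:I$ with $\mathrm{height}\,Q<s$ one has $I_{Q}=R''_{Q}$, so $(J:I)_{Q}=J_{Q}$ with $\mu(J_{Q})\le i\le\mathrm{height}\,Q=\mathrm{height}\,J_{Q}$; by Krull's theorem $J_{Q}$ is then generated by a regular sequence and $R''_{Q}/(J:I)_{Q}$ is Cohen--Macaulay. The real work is to promote this to genuine Cohen--Macaulayness of $R''/(J:I)$ — in particular to rule out unmixedness failures and to handle the primes of height $\ge s$ — and this is exactly where the generic structure of $I$ must be used. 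Since $I$ satisfies $G_{s}$, by Theorem \ref{GandSliding} it would suffice to verify that $I$ has the sliding depth property; the vanishing of $I$ in codimension $<s$ makes the local conditions defining sliding depth vacuous there, concentrating the verification in higher codimension, where the indeterminates $z_{ij}$ act as a deformation toward a complete intersection. Alternatively, one may argue directly that the $i$-residual intersections of $I$ with $i\le s-1$ are generic residual intersections over a Cohen--Macaulay ring, hence Cohen--Macaulay, in the spirit of Proposition \ref{U1-9}.
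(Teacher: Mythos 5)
First, a point of reference: the paper does not prove this proposition at all --- it is quoted from \cite{SUV} as a preliminary --- so your attempt can only be judged on its own merits, and it has a genuine gap at its very first step which then propagates. The claim that for every prime $Q$ of $R''$ with $\mathrm{height}\,Q\le s-1$ the images of $x_{1},\dots,x_{e-1}$ in $E''_{Q}\otimes k(Q)$ are linearly independent --- hence that $I_{Q}$ is free of rank one, that $\mathrm{grade}\,I\ge s$, and that $I$ is a complete intersection locally in codimension $s-1$ --- is false. Genericity of the $z_{ij}$ only controls the \emph{codimension} of the locus where the $x_{j}$ fail to span a free direct summand, and that codimension is governed by the ideal of maximal minors of the generic coefficient matrix, which is $2$, not $s$. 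Concretely, take $R=k[[y_{1},y_{2},y_{3}]]$, $E=R^{2}$, $U=mE$ with the six generators $y_{i}\epsilon_{1},y_{i}\epsilon_{2}$ ($\epsilon_{1},\epsilon_{2}$ the standard basis): then $\mathrm{grade}(E/U)=3$ and $E$ is free everywhere, so the hypotheses hold with $s=3$; but $x_{1}=c_{1}\epsilon_{1}+c_{2}\epsilon_{2}$ with $c_{1}=y_{1}z_{11}+y_{2}z_{31}+y_{3}z_{51}$ and $c_{2}=y_{1}z_{21}+y_{2}z_{41}+y_{3}z_{61}$, and $I\cong E''/F\cong(c_{1},c_{2})$ is a height-two ideal that is not principal at its minimal primes, which lie in $mR[Z]$ and have height $2\le s-1$. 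The proposition survives only because $G_{s}$ merely demands $\mu(I_{Q})\le\mathrm{height}\,Q$, and at these bad primes $\mu(I_{Q})=2=\mathrm{height}\,Q$; the actual argument in \cite{SUV} is a height estimate on the relevant Fitting-type ideal of the generic matrix, using $\mathrm{grade}(E/U)\ge s$ together with the local freeness of $E$, not a local-freeness statement about $I$. Note also that your $G_{s}$ paragraph never invokes $\mathrm{grade}(E/U)\ge s$, although without it $U_{P}$ need not generate $E_{P}$ and the $x_{j}$ could even vanish modulo $Q$.

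The same false claim undermines the induction you propose for $AN_{s-1}$: to keep the inductive hypotheses you need $\overline{E}=E''/R''x_{1}$ to be free locally in codimension $s-1$, and in the example above $\overline{E}=I$ already fails this. Finally, you explicitly leave open the base case $e=1$, which you correctly identify as the heart of the matter; appealing to ``generic residual intersections are Cohen--Macaulay'' or to Theorem \ref{GandSliding} requires verifying sliding depth or an honest $AN$-type hypothesis for $I$, and nothing in your argument does so. So neither half of the statement is actually established: the $G_{s}$ part rests on a false local-freeness assertion, and the $AN_{s-1}$ part is both built on that assertion and incomplete at its base case.
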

Our main motivation of this work is to provide the similar results
of following theorems to the case of modules.
\begin{thm}
\label{J-U0}\cite{JU} Let $R$ be a Cohen-Macaulay local ring of
dimension $d$ with infinite residue field, $I$ be an $R$-ideal
with height $g\geq2$, $\mathcal{\ell}(I)=l$, $r(I)=r$. Let $s$
be an integer with $r\leq s$. Assume $I$ satisfies $G_{l}$ and
$AN_{l-3}$ locally in codimension $l-1$, that $I$ satisfies $AN_{l-\mbox{max}\{2,s\}}$,
and that $\mathrm{depth}\, I^{n}\geq d-l+s-n+1$ for $1\leq n\leq s.$
Then $\mathcal{R}(I)$ is Cohen-Macaulay.
\end{thm}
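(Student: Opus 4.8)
The plan is to deduce the Cohen-Macaulayness of $\mathcal{R}(I)$ from the statement that the associated graded ring $\mathcal{G}=\mathrm{gr}_{I}(R)=\mathcal{R}(I)/I\mathcal{R}(I)$ is Cohen-Macaulay with $a(\mathcal{G})<0$. Reading the exact sequences $0\to\mathcal{R}(I)_{+}\to\mathcal{R}(I)\to R\to0$ and $0\to\mathcal{R}(I)_{+}(1)\to\mathcal{R}(I)\to\mathcal{G}\to0$ in each internal degree, and using that the strand $[H^{i}_{\mathfrak{M}}(\mathcal{R}(I))]_{n}$ (with $\mathfrak{M}$ the homogeneous maximal ideal of $\mathcal{R}(I)$) is built out of $H^{i}_{\mathfrak{m}}(I^{n})$, one sees that $\mathcal{R}(I)$ is Cohen-Macaulay once $\mathcal{G}$ is Cohen-Macaulay with $a(\mathcal{G})<0$, and that the hypothesis $\mathrm{depth}\,I^{n}\geq d-l+s-n+1$ for $1\leq n\leq s$ is exactly the control one needs on the low-degree strands. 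I would record at once that, since $g\geq2$ forces $\mathrm{depth}\,I\leq d-g+1$, the case $n=1$ gives $d-l+s\leq d-g+1$, hence $s\leq l-g+1\leq l-1$, so $r\leq s\leq l-1$; combined with the identity $a(\mathcal{G})=r-l$, valid once $\mathcal{G}$ is Cohen-Macaulay, this is what forces $a(\mathcal{G})<0$, and it is the only place the hypothesis $g\geq2$ is used.

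To prove that $\mathcal{G}$ is Cohen-Macaulay I would use a two-step reduction. When $\ell(I)<d$, cut by a general element $x\in\mathfrak{m}$: since $d>l\geq g$, the ring $\bar R=R/(x)$ has $\dim\bar R=d-1$ while $\bar I=I\bar R$ still has $\mathrm{ht}\,\bar I=g$, satisfies $G_{l}$, has $\ell(\bar I)=l$ and $r(\bar I)\leq r$, keeps the two Artin-Nagata conditions, and has $\mathrm{depth}_{\bar R}\,\bar I^{\,n}\geq(d-1)-l+s-n+1$; iterating reduces matters to the maximal-analytic-spread case $\ell(I)=d$. In that case I would induct, cutting by a general element $x\in I$: now $\ell(\bar I)=l-1=\dim\bar R$, $\mathrm{ht}\,\bar I=g-1$, the condition $G_{l-1}$ and the two Artin-Nagata conditions descend with their indices lowered by one, $r(\bar I)\leq r$, and --- granting the Valabrega-Valla equality $xR\cap I^{n}=xI^{n-1}$ in the relevant range --- the depth bounds pass to $\mathrm{depth}_{\bar R}\,\bar I^{\,n}\geq(d-1)-(l-1)+s-n+1$. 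The base cases (small height, maximal analytic spread, where $G_{l}$ is essentially $G_{\infty}$) are handled directly.

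The Artin-Nagata hypotheses enter decisively in this induction. For a general minimal reduction $J=(a_{1},\dots,a_{l})$ of $I$ they guarantee that the partial colon ideals $J_{i}:I$, with $J_{i}=(a_{1},\dots,a_{i})$, have the expected height and $R/(J_{i}:I)$ Cohen-Macaulay; this prevents the depths of the powers $I^{n}$ from collapsing as one strips off general elements, and, through Proposition \ref{U1-9} and Theorem \ref{GandSliding}, it yields the sliding-depth condition that makes the approximation and Koszul-type complexes acyclic in the base cases, whence their Cohen-Macaulayness. Once $\mathrm{gr}_{\bar I}(\bar R)$ is known to be Cohen-Macaulay, the Valabrega-Valla equality says that the leading form $x^{*}$ is a nonzerodivisor on $\mathcal{G}$ with $\mathcal{G}/x^{*}\mathcal{G}\cong\mathrm{gr}_{\bar I}(\bar R)$, so $\mathcal{G}$ is Cohen-Macaulay; then $a(\mathcal{G})=r-l<0$ by the first paragraph, and Cohen-Macaulayness transfers to $\mathcal{R}(I)$.

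The step I expect to be the main obstacle is precisely the behavior under the general element $x\in I$: one must show simultaneously that $x$ is ``superficial enough'' to force $xR\cap I^{n}=xI^{n-1}$ in the needed range --- so that both the identification $\mathcal{G}/x^{*}\mathcal{G}\cong\mathrm{gr}_{\bar I}(\bar R)$ and the nonzerodivisor property hold --- and that every hypothesis of the theorem, namely $G_{l}$, the two Artin-Nagata conditions with correctly shifted indices, and the depth bounds $\mathrm{depth}\,I^{n}\geq d-l+s-n+1$, is genuinely inherited by $\bar I$ over $\bar R$. Each of these is an assertion that a sufficiently general element is ``good enough,'' and establishing them together requires the full interplay of the residual-intersection and Artin-Nagata apparatus with the depth accounting inside the approximation complexes. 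It is exactly the fragility of this accounting --- the fact that these very hypotheses need not survive the analogous construction for modules, not even the passage to the generic Bourbaki ideal --- that makes the module version pursued in the present paper a genuinely new problem rather than a formal translation of the ideal case.
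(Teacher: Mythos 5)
This theorem is imported from Johnson--Ulrich \cite{JU}: the paper states it in the preliminaries with a citation and gives no proof, so there is no internal argument to measure your attempt against. On its own terms, your sketch is an accurate roadmap of how \cite{JU} actually proceeds: reduce to showing $\mathcal{G}=\mathrm{gr}_I(R)$ is Cohen--Macaulay and invoke the Ikeda--Trung criterion together with the reduction-number computation of the $a$-invariant; your derivation of $r\le s\le l-g+1\le l-1$ from the $n=1$ depth hypothesis and $\mathrm{depth}\,I\le d-g+1$ is correct and does explain why $a(\mathcal{G})<0$ once $\mathcal{G}$ is Cohen--Macaulay; and the two-stage specialization (general $x\in\mathfrak{m}$ down to the case $\ell(I)=\dim R$, then general $x\in I$ with Valabrega--Valla) is the right architecture.

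But as a proof it has a real gap, which you candidly flag yourself: everything that makes the theorem true is concentrated in the step you defer, namely the proof that $\mathcal{G}$ is Cohen--Macaulay. That requires showing that for a general minimal reduction $J=(a_1,\dots,a_l)$ the colon ideals $(a_1,\dots,a_i):I$ are residual intersections of the expected height with Cohen--Macaulay quotients (this is where $AN_{l-\max\{2,s\}}$ and the local $AN_{l-3}$ enter), deducing from this the depth estimates on $I^{n}/(a_1,\dots,a_i)I^{n-1}$ and the crucial intersection equalities $(a_1,\dots,a_i)\cap I^{n}=(a_1,\dots,a_i)I^{n-1}$, and verifying that $G_l$, both Artin--Nagata hypotheses with shifted indices, and the depth bounds genuinely descend to $\bar I\subset\bar R$. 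None of this is carried out; it is the technical heart of \cite{JU} and cannot be waved through. So your proposal is a correct description of the known proof's shape rather than a proof: as a justification for citing the result (which is all the present paper offers) it is fine, but as a self-contained argument it is incomplete at precisely the decisive step.
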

\medskip{}

When $s=l-g+1$ and $I$ is $G_l$, the Artin-Nagata property is automatically satisfied.
\begin{cor}
\label{J-U3-1}\cite{JU} Let $R$ be a Cohen-Macaulay local ring of
dimension $d$ with infinite residue field, $I$ be an $R$-ideal
with height $g\geq2$, $\mathcal{\ell}(I)=l$, $r(I)\leq l-g+1$.
Assume $I$ is $G_{l}$ and $\mathrm{depth}\, I^{n}\geq d-g-n+2$
for $1\leq n\leq l-g+1.$ Then $\mathcal{R}(I)$ is Cohen-Macaulay.
\end{cor}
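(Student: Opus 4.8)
The plan is to obtain the statement as the case $s=l-g+1$ of Theorem \ref{J-U0}. For that choice, the depth hypothesis of Theorem \ref{J-U0}, namely $\mathrm{depth}\,I^{n}\geq d-l+s-n+1$ for $1\leq n\leq s$, reads $\mathrm{depth}\,I^{n}\geq d-g-n+2$ for $1\leq n\leq l-g+1$, which is exactly what is assumed; and the requirement $r(I)\leq s$ is exactly $r(I)\leq l-g+1$. So the corollary will follow once the two Artin--Nagata requirements of Theorem \ref{J-U0} are verified from the hypothesis that $I$ is $G_{l}$: that $I$ satisfies $AN_{l-3}$ locally in codimension $l-1$, and that $I$ satisfies $AN_{l-\max\{2,s\}}$.

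I would first unwind the index $l-\max\{2,s\}$. Since $g\geq 2$ and $g\leq l$, we have $s=l-g+1\geq 1$; if $l>g$ then $s\geq 2$ and $l-\max\{2,s\}=l-s=g-1$, while if $l=g$ then $s=1$ and $l-\max\{2,s\}=l-2=g-2$. In either case it is enough to show that a height-$g$ ideal in a Cohen--Macaulay local ring satisfying $G_{l}$ satisfies $AN_{g-1}$. This is where $G_{l}$ is used, and I expect it to be the main obstacle. The point is that in the definition of $AN_{g-1}$ the index $i$ runs only over $i\leq g-1<g=\mathrm{height}\,I$, so for any ideal $J\subseteq I$ with $\mu(J)\leq i\leq\mathrm{height}(J:I)$ the residual intersection $J:I$ is automatically \emph{geometric}, because $\mathrm{height}((J:I)+I)\geq g\geq i+1$; at a minimal prime $P$ of $J:I$ with $P\not\supseteq I$ one has $(J:I)_{P}=J_{P}$, which the height count forces to be generated by a regular sequence of length $\mathrm{height}\,P$, while at a minimal prime $P$ with $P\supseteq I$ the condition $G_{l}$ --- which yields $G_{i+1}$ since $i+1=g\leq l$ --- supplies the bound $\mu(I_{P})\leq\mathrm{height}\,P$ that feeds the residual-intersection machinery and lets one conclude that $R/(J:I)$ is unmixed and Cohen--Macaulay. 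Carrying out this local-to-global analysis, and in particular controlling the depth of $R/(J:I)$, is the delicate step.

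For the ``locally in codimension $l-1$'' claim, localize at a prime $P\supseteq I$ with $\dim R_{P}\leq l-1$ (for $P\not\supseteq I$ the condition is vacuous). Then $I_{P}$ still satisfies $G_{l}$, hence $G_{\infty}$, because the ambient dimension is at most $l-1$. By Proposition \ref{U1-9} applied in $R_{P}$ it suffices to know that $I_{P}$ has the low-index property $AN_{\dim R_{P}-2}$, which again follows from the automatic Cohen--Macaulayness of geometric residual intersections of small index discussed above; Proposition \ref{U1-9} then gives that $I_{P}$ satisfies $AN_{s}$ for all $s$, in particular $AN_{l-3}$. With both Artin--Nagata hypotheses of Theorem \ref{J-U0} established, that theorem applies and shows that $\mathcal{R}(I)$ is Cohen--Macaulay.
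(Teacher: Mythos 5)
Your overall strategy---specialize Theorem \ref{J-U0} at $s=l-g+1$ and argue that the Artin--Nagata hypotheses come for free---is exactly the paper's (which devotes a single sentence to this and otherwise cites \cite{JU}). The bookkeeping for the depth and reduction-number hypotheses is correct, and so is the global Artin--Nagata condition, though it is simpler than you make it: for $i\leq g-1$, any $J\subseteq I$ with $\mu(J)\leq i\leq\mathrm{height}(J:I)$ forces $\mathrm{height}\,J=\mu(J)=i<g$, so $J$ is a complete intersection, $I$ contains a nonzerodivisor modulo $J$, and $J:I=J$; hence $AN_{g-1}$ holds for \emph{every} height-$g$ ideal in a Cohen--Macaulay local ring, with no appeal to $G_{l}$ and no ``delicate local-to-global step.''

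The genuine gap is in the verification of ``$AN_{l-3}$ locally in codimension $l-1$.'' After localizing at $P$ with $\dim R_{P}\leq l-1$ you correctly obtain $G_{\infty}$ for $I_{P}$, but Proposition \ref{U1-9} requires $AN_{\dim R_{P}-2}$ as an \emph{input}, and you assert that this ``again follows from the automatic Cohen--Macaulayness of residual intersections of small index.'' It does not: the automatic range is $i\leq g-1$ only, whereas $\dim R_{P}-2$ can be as large as $l-3$, which exceeds $g-1$ whenever $l\geq g+3$. For $i\geq g$ the Cohen--Macaulayness of $R/(J:I)$ is precisely the nontrivial content of residual intersection theory and fails for general ideals, even under $G_{\infty}$; so your argument is circular exactly in the range where the hypothesis has content. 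The missing ingredient is the depth hypothesis on the powers $I^{n}$, which you use only to match the depth hypothesis of Theorem \ref{J-U0} and never to produce Artin--Nagata properties. The correct route (the one in \cite{JU}, via \cite{U}) is that $G_{l}$ together with $\mathrm{depth}\,I^{n}\geq d-g-n+2$ for $1\leq n\leq l-g+1$ yields the Artin--Nagata property up to index $l-1$; in the Gorenstein case this mechanism is even visible inside the paper, since Theorem \ref{U0} extracts sliding depth from these depth bounds and Theorem \ref{GandSliding} converts sliding depth plus $G_{s}$ into $AN_{s}$.
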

\medskip{}

\begin{thm}
\cite{JU}\label{JU2} Let $(R,m)$ be a Gorenstein local ring of
dimension $d$ with infinite residue field. Let $I$ be an $R$-ideal
with $\mathrm{grade}I=g\geq2$ and $\ell(I)=l$, $r(I)=r\leq l-g$.
If $I$ has $G_{l}$ condition and $\mathrm{depth}I^{n}\geq d-n-g+2$
for $1\leq n\leq l-g$. Then $\mathcal{R}(I)$ is Gorenstein.
\end{thm}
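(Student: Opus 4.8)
The plan is to reduce the question to the graded canonical module of $\mathcal{R}(I)$ and then use the bound $r\le l-g$ to force that module to be cyclic. First I would check that $\mathcal{R}:=\mathcal{R}(I)$ is Cohen-Macaulay of dimension $d+1$. One may apply Theorem \ref{J-U0} with $s=l-g$: the condition $r\le s$ holds since $r\le l-g$; the inequalities $\mathrm{depth}\,I^{n}\ge d-n-g+2\ge d-l+s-n+1$ for $1\le n\le l-g$ are exactly the depth bounds required; and $G_{l}$ forces the Artin-Nagata conditions $AN_{l-3}$ in codimension $l-1$ and $AN_{l-\max\{2,s\}}$, just as in the case $s=l-g+1$ recorded before Corollary \ref{J-U3-1} (through Proposition \ref{U1-9} and Theorem \ref{GandSliding}). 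Since $R$ is Gorenstein, $\mathcal{R}$ then has a graded canonical module $\omega_{\mathcal{R}}$, and $\mathcal{R}$ is Gorenstein exactly when $\omega_{\mathcal{R}}$ is a cyclic $\mathcal{R}$-module. As $\mathrm{grade}\,I\ge2$, one has $[\omega_{\mathcal{R}}]_{n}=0$ for $n\le0$ while $[\mathcal{R}]_{0}=R\ne0$, so the only available degree shift is $-1$; the target becomes the isomorphism $\omega_{\mathcal{R}}\cong\mathcal{R}(-1)$.

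To get hold of $\omega_{\mathcal{R}}$ I would work through the associated graded ring $G:=\mathrm{gr}_{I}(R)=\mathcal{R}/I\mathcal{R}$ together with the exact sequences
\[
0\to\mathcal{R}_{+}\to\mathcal{R}\to R\to0,\qquad 0\to\mathcal{R}_{+}(1)\to\mathcal{R}\to G\to0.
\]
Since $\mathrm{grade}\,\mathcal{R}_{+}\ge2$, the first sequence realizes $\omega_{\mathcal{R}}$, up to shift, as a graded ideal of $\mathcal{R}$ concentrated in positive degrees, following the structure theory of Herzog, Simis and Vasconcelos for canonical modules of Rees algebras. The depth hypotheses on the powers $I^{n}$ feed through the second sequence and a depth count to make $G$ Cohen-Macaulay, and local duality applied to that sequence links $\omega_{\mathcal{R}}$, $\omega_{G}$ and $\omega_{R}=R$, so the degrees in which $\omega_{\mathcal{R}}$ carries minimal generators are controlled by those of $\omega_{G}$, hence by $a(G)$. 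Choosing a minimal reduction $J\subseteq I$ with $\mu(J)=l$ and $r_{J}(I)=r$ (available since the residue field is infinite), the Cohen-Macaulayness of $G$ gives $a(G)=r_{J}(I)-\ell(I)=r-l\le -g$; combined with the previous observation this confines the minimal generators of $\omega_{\mathcal{R}}$ to the single degree $1$.

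It remains to see that the degree-$1$ strand $[\omega_{\mathcal{R}}]_{1}$ is a free $R$-module of rank one, and this is where the Gorenstein hypothesis on $R$ enters: the canonical self-duality of the Cohen-Macaulay ring $\mathcal{R}$ together with $\omega_{R}=R$ forces the bottom strand of $\omega_{\mathcal{R}}$ to be principal. A generator of $[\omega_{\mathcal{R}}]_{1}$ then induces an embedding $\omega_{\mathcal{R}}\hookrightarrow\mathcal{R}(-1)$ whose cokernel is torsion and, by the $G_{l}$ and Artin-Nagata hypotheses, has no support in codimension $\le1$; since both modules are maximal Cohen-Macaulay of rank one, the embedding is an isomorphism, so $\omega_{\mathcal{R}}\cong\mathcal{R}(-1)$ and $\mathcal{R}(I)$ is Gorenstein.

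The main obstacle is the last step, namely upgrading ``$\mathcal{R}$ Cohen-Macaulay'' to ``$\omega_{\mathcal{R}}$ cyclic.'' The Cohen-Macaulay argument only controls depths, whereas Gorensteinness demands computing the top local cohomology $H^{d+1}_{\mathfrak{M}}(\mathcal{R})$ (with respect to the homogeneous maximal ideal) essentially on the nose and showing its socle is one-dimensional, i.e. that no extra minimal generator of $\omega_{\mathcal{R}}$ survives. Doing this forces one to control the top of the canonical module both in low codimension, where the residual-intersection and linkage structure of $I$ relative to $J$ supplied by the Artin-Nagata properties must be used to exclude extra generators, and in the ``$R$-direction,'' where the depth conditions on $I^{1},\dots,I^{l-g}$ must kill the potential extra graded components. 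The sharp bound $r\le l-g$, in contrast to the $r\le l-g+1$ that already yields Cohen-Macaulayness, is exactly what places the socle in the one degree in which the symmetry coming from $\omega_{R}=R$ can be invoked; with one more unit of reduction number an extra minimal generator of $\omega_{\mathcal{R}}$ can persist and $\mathcal{R}(I)$ is then only Cohen-Macaulay.
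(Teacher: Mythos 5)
This statement is quoted verbatim from Johnson--Ulrich \cite{JU}; the paper gives no proof of it, so there is no internal argument to compare against and your sketch has to stand on its own. Its skeleton is sensible: reduce to the cyclicity of the graded canonical module, note that $a(\mathcal{R}(I))=-1$ forces the shift, and control the generating degrees through $G=\mathrm{gr}_I(R)$ and $a(G)=r_J(I)-\ell(I)\le -g$. Two of the supporting claims, however, are not as automatic as you assert. First, in invoking Theorem \ref{J-U0} with $s=l-g$ you say that $G_l$ supplies $AN_{l-\max\{2,s\}}$ ``just as in the case $s=l-g+1$''; but that remark rests on Theorem \ref{U0}, which needs $\mathrm{depth}\,I^{n}\ge d-g-n+2$ for $n$ up to $l-g+1$, whereas the present hypotheses only reach $n=l-g$, so sliding depth (hence the global Artin--Nagata condition via Theorem \ref{GandSliding}) does not follow by the same route and with $s=l-g$ you in fact need the stronger condition $AN_{g}$ rather than $AN_{g-1}$. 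Second, the map at the end goes the wrong way: if $\omega_{\mathcal{R}}$ is generated in degree $1$ and $[\omega_{\mathcal{R}}]_1$ is cyclic, you get a surjection $\mathcal{R}(-1)\twoheadrightarrow\omega_{\mathcal{R}}$, not an embedding $\omega_{\mathcal{R}}\hookrightarrow\mathcal{R}(-1)$; one then kills the (torsion) kernel by a rank count. That is repairable, but it signals that the endgame was not actually carried out.

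The genuine gap is the one you name yourself: the entire content of the theorem is the step you defer as ``the main obstacle,'' namely that $[\omega_{\mathcal{R}}]_{1}$ is free of rank one and that no minimal generator of $\omega_{\mathcal{R}}$ survives in higher degrees. ``The canonical self-duality of $\mathcal{R}$ together with $\omega_R=R$ forces the bottom strand to be principal'' is a statement of the desired conclusion, not an argument; the degree bookkeeping via $a(G)$ bounds where $\omega_G$ lives but does not by itself confine the minimal generators of $\omega_{\mathcal{R}}$ to degree $1$, nor identify the degree-one strand. In \cite{JU} this is exactly where the work happens: the canonical module is shown to have the expected form by induction on the analytic spread, descending through residual intersections $J\colon I$ of a minimal reduction and using the Artin--Nagata properties and the depth conditions on the powers $I^{n}$ to control the canonical modules of the residual rings. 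Without an argument of that kind (or a genuine substitute for it), the proposal establishes Cohen--Macaulayness but not Gorensteinness, so it does not prove the theorem.
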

In order to understand when $\mathcal{R}(E)$ is Cohen-Macaulay with
respect to the depth bound of $E^{n}$ and $G_{\infty}$ condition
similar to above theorems, we need the following theorem that gives
sufficient conditions for an ideal to have sliding depth and $G_{\infty}$.
Then we want to pass the conditions on a module to its generic generic
Bourbaki ideal.
\begin{thm}
\label{U0}\cite{U} Let $R$ be a Gorenstein local ring with dimension
$d$, $I$ be an $R$-ideal with $\mathrm{grade}I=g$. We assume $I$
has condition $G_{\infty}$ and $\mathcal{\ell}(I)=l$. If $\mathrm{depth}I^{n}\geq d-g-n+2$
for $1\leq n\leq l-g+1$, then $I$ is sliding depth.
\end{thm}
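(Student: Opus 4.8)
The plan is to derive sliding depth from the depth conditions on the powers $I^{n}$ by way of residual intersections, with $G_{\infty}$ doing the work of making those residual intersections well behaved. First I would reduce to the case of an infinite residue field by replacing $R$ with the faithfully flat extension $R(X)=R[X]_{\mathfrak{m}R[X]}$; this changes neither $\mathrm{grade}\,I$, $\ell(I)$, the $G_{\infty}$ condition, the depths of the powers $I^{n}$, nor the sliding depth property, and $R(X)$ is again Gorenstein. So assume $k$ is infinite.

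Next, using $G_{\infty}$ I would pick general elements $a_{1},\dots,a_{l}\in I$ generating a minimal reduction $J$ of $I$, and set $J_{i}=(a_{1},\dots,a_{i})$ and $K_{i}=J_{i}:I$ for $g\le i\le l$. By $G_{\infty}$ and the genericity of the $a_{i}$, each $K_{i}$ is a geometric $i$-residual intersection of $I$ with $\mathrm{height}\,K_{i}\ge i$, and $I$ becomes a complete intersection after localizing at the height-$i$ minimal primes of $K_{i}$. The technical heart of the argument is the claim that the hypothesis $\mathrm{depth}\,I^{n}\ge d-g-n+2$ for $1\le n\le l-g+1$ --- equivalently $\mathrm{depth}\,R/I^{n}\ge\dim R/I-n+1$ in that range --- forces $R/K_{i}$ to be Cohen--Macaulay of dimension $d-i$, and in fact makes the iterated mapping cone built from the $a_{i}$ and free resolutions of the $R/I^{n}$ into a resolution of $R/K_{i}$. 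This is exactly the Artin--Nagata/residual-intersection bookkeeping in which $G_{\infty}$ is indispensable, and it is of a piece with the two-way traffic between the $AN$ properties, $G_{\infty}$, and sliding depth recorded in Proposition \ref{U1-9} and Theorem \ref{GandSliding}.

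Finally I would convert Cohen--Macaulayness of the residual intersections $R/K_{i}$ into the depth bounds $\mathrm{depth}\,H_{i}(\underline{x};R)\ge d-n+i$ defining sliding depth, where $\underline{x}$ minimally generates $I$ and $n=\mu(I)$. Since $\mathrm{grade}\,I=g$ the Koszul homology $H_{i}(\underline{x};R)$ vanishes for $i>n-g$, so only $0\le i\le n-g$ is at stake; for $i=0$ the required inequality is $\mathrm{depth}\,R/I\ge d-n$, which follows from $\mathrm{depth}\,I\ge d-g+1$ (the $n=1$ case of the depth hypothesis) together with $\mu(I)\ge g$. For $1\le i\le n-g$ I would use local duality over the Gorenstein ring $R$ --- this is where Gorensteinness rather than mere Cohen--Macaulayness is used --- to relate $H_{i}(\underline{x};R)$, via the self-dual Koszul complex and the approximation complex $\mathcal{Z}(I)$, to the canonical modules $\omega_{R/K_{j}}$ of the Cohen--Macaulay residual intersections produced in the previous step, and then read off the depth estimates. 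I expect this last translation to be the main obstacle: it requires threading the depth data through iterated mapping cones while keeping the graded and dual bookkeeping consistent, and it is precisely the point where the passage from powers of $I$ to Koszul homology of $I$ is more subtle than one might first guess.
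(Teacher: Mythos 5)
First, a point of order: the paper does not prove this statement at all --- Theorem \ref{U0} is quoted as background from Ulrich's paper \cite{U} --- so there is no in-paper argument to compare yours against, and your sketch has to stand on its own. As it stands, it correctly identifies the relevant toolkit (general elements $a_1,\dots,a_l$ of a minimal reduction, the residual intersections $K_i=(a_1,\dots,a_i):I$, the role of $G_\infty$, and duality over the Gorenstein ring), but it defers precisely the two steps that constitute the content of the theorem. The claim that the depth hypotheses on the $I^n$ force $R/K_i$ to be Cohen--Macaulay is not ``bookkeeping'': it is the heart of the matter, and in \cite{U} it is obtained by an induction on $i$ that passes the depth of $I^{i-g+1}/J_iI^{i-g}$ through its identification with the canonical module $\omega_{R/K_i}$ of the previously constructed residual intersection (this is exactly where Gorensteinness is used). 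Your proposed mechanism --- that an iterated mapping cone built from free resolutions of the $R/I^n$ is a free resolution of $R/K_i$ --- is a much stronger structural assertion for which you give no justification, and I do not believe it is true in this generality; the actual arguments yield depth estimates, not resolutions.

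Second, your final step vaguely re-derives what is exactly Proposition \ref{U1-9} of the paper: $G_\infty$ together with $AN_{d-2}$ implies sliding depth. You should invoke that result rather than appeal to an unexplained passage from the Koszul homology $H_i(\underline{x};R)$ through the approximation complex to the modules $\omega_{R/K_j}$; no such direct relation is standard, and the proof of Proposition \ref{U1-9} proceeds instead by induction on $d$ with local duality. But even granting your step 3, invoking Proposition \ref{U1-9} requires $AN_{d-2}$, i.e.\ Cohen--Macaulayness of $i$-residual intersections for \emph{all} $i\le d-2$, whereas your construction only produces $K_g,\dots,K_l$. When $l<d-2$ this leaves a genuine gap for $l<i\le d-2$ (where any general $i$-generated subideal already contains a reduction of $I$), which must be closed by a separate argument. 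A smaller inaccuracy: $K_l$ is generally not a \emph{geometric} residual intersection, since $J_l$ is a reduction of $I$ and hence $V(J_l)=V(I)$, so $\mathrm{height}(I+K_l)=\mathrm{height}\,K_l$ need not exceed $l$.
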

\medskip{}

The acyclicity of the $\mathcal{Z}(E)$ complex could guarantee the
existence of the Bourbaki ideal of a module such that the ideal is
generated by a proper sequence. The following theorem combining with
Theorem \ref{EtoI} give us equivalent conditions between the module
and the generic Bourbaki ideal of the module once one can show the
$\mathcal{Z}(E)$ complex is acyclic.
\begin{thm}
\label{H-S-V5_6}\cite{HSV2} Let $R$ be a local ring with infinite
residue field. Suppose E is a finitely generated torsion-free R-module
and either (i) $\mathrm{pd}E<\infty$ or (ii) $R$ is a normal domain.
The following conditions are equivalent: (a) $\mathcal{Z}(E)$
is acyclic. (b) $E$ admits a Bourbaki sequence $0\rightarrow I\rightarrow F\rightarrow E\rightarrow0$
such that: (b1) $F$ is generated by elements which form a regular
sequence of 1-forms on $\mathcal{S}(E)$. (b2) $I$ is generated by
a proper sequence.
\end{thm}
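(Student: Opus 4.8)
The plan is to reduce the equivalence to the ideal case by stripping off one general element of $E$ at a time and tracking how the approximation complex changes. Since $k$ is infinite, choose a general $1$-form $x\in E=[\mathcal{S}(E)]_{1}$ and set $E'=E/Rx$; the two compatible identifications $\mathcal{S}(E)/x\mathcal{S}(E)\cong\mathcal{S}(E')$ and $\mathcal{Z}(E)/x\mathcal{Z}(E)\cong\mathcal{Z}(E')$ are the engine of the reduction. Under hypothesis (i) or (ii) the quotient $E'$ is again torsion-free of rank $e-1$, so $e-1$ such steps end at a torsion-free rank-one module isomorphic to an $R$-ideal $I$ of positive grade, while the elements $x_{1},\dots,x_{e-1}$ used along the way span a free submodule $F$ with $E/F\cong I$. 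Everything then follows from the single reduction statement ($\star$): \emph{$\mathcal{Z}(E)$ is acyclic if and only if $x$ is a nonzerodivisor on $\mathcal{S}(E)$ and $\mathcal{Z}(E')$ is acyclic}, together with the base case $e=1$, where $E\cong I$ and $F=0$ and ($\star$) degenerates to the Herzog--Simis--Vasconcelos criterion that $\mathcal{Z}(I)$ is acyclic precisely when $I$ is generated by a proper sequence.

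To prove ($\star$) I would use the short exact sequence of complexes $0\to\mathcal{Z}(E)\xrightarrow{\,x\,}\mathcal{Z}(E)\to\mathcal{Z}(E')\to0$, which is exact once $x$ is a nonzerodivisor on each term of $\mathcal{Z}(E)$; generality of $x$ is used precisely here, the terms of $\mathcal{Z}(E)$ being assembled from the Koszul cycle modules of the presentation and a general $1$-form avoiding their relevant associated primes. The long exact homology sequence then gives, in the forward direction, $H_{i}(\mathcal{Z}(E'))=0$ for $i\geq1$ at once, while acyclicity of $\mathcal{Z}(E)$ forces $\mathcal{S}(E)=H_{0}(\mathcal{Z}(E))$ to be unmixed enough that a general $1$-form is regular on it. Conversely, if $x$ is regular on $\mathcal{S}(E)$ and $\mathcal{Z}(E')$ is acyclic, the same sequence makes $x\colon H_{i}(\mathcal{Z}(E))\to H_{i}(\mathcal{Z}(E))$ surjective for $i\geq1$; since each $H_{i}(\mathcal{Z}(E))$ is a finitely generated graded $\mathcal{S}(E)$-module concentrated in nonnegative degrees and $x$ has degree $1$, the graded Nakayama lemma forces $H_{i}(\mathcal{Z}(E))=0$.

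With ($\star$) the two implications are just iteration. For $(a)\Rightarrow(b)$: acyclicity of $\mathcal{Z}(E)$ produces a general $x_{1}$ regular on $\mathcal{S}(E)$ and $\mathcal{Z}(E/Rx_{1})$ acyclic; repeating yields $x_{1},\dots,x_{e-1}$ that form a regular sequence of $1$-forms on $\mathcal{S}(E)$, which is (b1), and at the bottom $\mathcal{Z}(I)$ acyclic, hence $I$ generated by a proper sequence, which is (b2); the associated Bourbaki sequence is the one cut out by $F=\sum Rx_{j}$. For $(b)\Rightarrow(a)$: (b2) gives $\mathcal{Z}(I)$ acyclic by the base case, and (b1) supplies exactly the nonzerodivisor hypothesis needed to climb back up one application of ($\star$) at a time, ending with $\mathcal{Z}(E)$ acyclic.

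The main obstacle is the bookkeeping behind the compatibility isomorphisms, above all $\mathcal{Z}(E)/x\mathcal{Z}(E)\cong\mathcal{Z}(E')$: one has to see how adjoining the relation killing $x$ transforms the Koszul-cycle data defining $\mathcal{Z}$, and check that a general $x$ is a nonzerodivisor on each cycle module. The second delicate point is that $E'=E/Rx$ must remain torsion-free for general $x$, so that the iteration genuinely terminates in an ideal; this is exactly where the hypotheses $\mathrm{pd}\,E<\infty$ or $R$ normal cannot be dropped, since without them the Bourbaki quotient can acquire torsion. Once these are in place, the remaining depth and graded-Nakayama arguments are routine for approximation complexes.
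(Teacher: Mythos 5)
This statement is quoted from Herzog--Simis--Vasconcelos \cite{HSV2} and the paper gives no proof of it, so there is nothing internal to compare your argument against; I can only assess your sketch on its own terms. Your overall strategy --- descend by general $1$-forms $x\in E=[\mathcal{S}(E)]_{1}$, relate $\mathcal{Z}(E)$ to $\mathcal{Z}(E/Rx)$ by a short exact sequence of complexes, and climb back up with graded Nakayama --- is indeed the standard mechanism in the approximation-complex literature, and your converse direction (surjectivity of $x$ on $H_{i}(\mathcal{Z}(E))$ plus boundedness below forces vanishing) is sound. One small but real point of care: the sequence displayed in the statement is $0\rightarrow I\rightarrow F\rightarrow E\rightarrow 0$, whereas you silently work with $0\rightarrow F\rightarrow E\rightarrow I\rightarrow 0$ (with $F=\sum Rx_{j}$ free of rank $e-1$ inside $E$); your reading is the one consistent with (b1) and with the generic Bourbaki construction used throughout the paper, but you should say so explicitly.

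There are two places where your sketch asserts exactly the hard content. First, in the forward direction of $(\star)$ you claim that acyclicity of $\mathcal{Z}(E)$ makes $\mathcal{S}(E)$ ``unmixed enough'' that a general $1$-form is a nonzerodivisor. This is the crux of $(a)\Rightarrow(b1)$ and does not follow from prime avoidance alone: one must rule out associated primes of $\mathcal{S}(E)$ that contain the whole irrelevant ideal $\mathcal{S}(E)_{+}$, i.e.\ show $\mathrm{grade}(\mathcal{S}(E)_{+},\mathcal{S}(E))>0$ when $e\geq 2$, and since no Cohen--Macaulay hypothesis is in force, a height bound on $\mathcal{S}(E)_{+}$ gives no grade bound; an actual argument extracting this from the acyclic complex (via depth-counting along $\mathcal{Z}(E)$ or the proper-sequence machinery) is needed. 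Second, the isomorphism $\mathcal{Z}(E)/x\mathcal{Z}(E)\cong\mathcal{Z}(E/Rx)$ and the injectivity of $x$ on each term of $\mathcal{Z}(E)$ are not bookkeeping: the terms of $\mathcal{Z}(E/Rx)$ are cycle modules of a Koszul complex built from a \emph{different} presentation (one more relation), and identifying the quotient complex, together with verifying that a general $x$ is regular on every $Z_{i}$, is where the generic-element lemmas of \cite{HSV1} do real work; as written your proof defers precisely the step it needs. Your base case ($\mathcal{Z}(I)$ acyclic iff $I$ is generated by a proper sequence) is an earlier Herzog--Simis--Vasconcelos result and is fair to cite, and the torsion-freeness of $E/Rx$ under (i) or (ii) is correctly identified as the point where those hypotheses enter. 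So: right skeleton, but the two load-bearing lemmas are assumed rather than proved.
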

By checking the sliding depth and $\mathcal{F}_{0}$ condition, we
can see that if the symmetric algebra of a module is Cohen-Macaulay.
In particular, if one can show that a module is of linear type and
the module has the sliding depth and $\mathcal{F}_{0}$ condition,
the Cohen-Macaulayness of the Rees algebra of the module is provided. 
\begin{thm}
\label{HSV6.2}\cite{HSV2} Let $R$ be a Cohen-Macaulay local
ring and $E$ a finitely generated R-module of rank$(E)=e$. The following
conditions are equivalent: (a) $\mathcal{Z}(E)$ is acyclic and $\mathcal{S}(E)$
is Cohen-Macaulay. (b) $E$ satisfies sliding depth $SD_{e}$ and
$\mathcal{F}_{0}$.
\end{thm}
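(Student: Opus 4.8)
The plan is to prove the two implications directly from the structure of the approximation complex $\mathcal{Z}(E)$, using the Acyclicity Lemma of Peskine--Szpiro, after reformulating the hypothesis $SD_e$ as a depth bound on the terms of $\mathcal{Z}(E)$. Fix a presentation $R^m\xrightarrow{\varphi}R^n\to E\to 0$, put $\mathcal{B}=R[t_1,\dots,t_n]$ with homogeneous maximal ideal $\mathfrak{M}$ and $d=\dim R$, so that $\mathcal{S}(E)=\mathcal{B}/J$ with $J$ generated by the linear forms read off from $\varphi$. Recall that $\mathcal{Z}(E)=M^{*}(S_{+};\mathcal{S}(E))$ is a finite complex $0\to\mathcal{Z}_\ell\to\cdots\to\mathcal{Z}_1\to\mathcal{Z}_0\to 0$ of graded $\mathcal{B}$-modules with $H_0(\mathcal{Z}(E))=\mathcal{S}(E)$, each term being $\mathcal{Z}_i=Z_i\otimes_R\mathcal{B}$ up to a degree shift, where $Z_i$ is the relevant module of Koszul cycles attached to $\varphi$; thus every graded strand $(\mathcal{Z}_i)_j$ is a finite direct sum of copies of $Z_i$ and $\mathrm{depth}_{\mathfrak{M}}\mathcal{Z}_i=\mathrm{depth}_R Z_i+n$. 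Chasing the exact sequences $0\to B_i\to Z_i\to H_i\to 0$ and $0\to Z_i\to K_i\to B_{i-1}\to 0$ that link the Koszul cycles $Z_i$, boundaries $B_i$, free modules $K_i$ and homology $H_i=[H_i(S_{+};\mathcal{S}(E))]_i$, a descending induction shows that $SD_e$ is equivalent to the estimates $\mathrm{depth}_R Z_i\ge d-n+i+e$ for all $i$. This reformulation is the one place where the rank $e$, rather than the value $1$ that occurs for ideals, must be tracked.

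For (b) $\Rightarrow$ (a), assume $SD_e$ and $\mathcal{F}_0$. Both conditions localize, descending to $E_P$ over $R_P$ with $d$ replaced by $\mathrm{ht}\,P$, so we induct on $d$: at every non-maximal prime $P$ the inductive hypothesis makes $\mathcal{Z}(E)_P\cong\mathcal{Z}(E_P)$ acyclic, hence for $i>0$ each graded strand of $H_i(\mathcal{Z}(E))$ is an $R$-module of finite length, in particular of depth $0$. Working one internal degree at a time over $R$, the complex $(\mathcal{Z}(E))_j$ has terms of sufficiently large depth by the reformulation above, so the Acyclicity Lemma forces $H_i(\mathcal{Z}(E))=0$ for all $i>0$; thus $\mathcal{Z}(E)$ is acyclic and is a finite resolution of $\mathcal{S}(E)$ over $\mathcal{B}$. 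Now $\mathcal{F}_0$ gives $\dim\mathcal{S}(E)=d+e$, while depth-chasing through this resolution yields $\mathrm{depth}\,\mathcal{S}(E)\ge\min_i(\mathrm{depth}_{\mathfrak{M}}\mathcal{Z}_i-i)=\min_i(\mathrm{depth}_R Z_i+n-i)\ge d+e$, so $\mathcal{S}(E)$ is Cohen--Macaulay.

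For (a) $\Rightarrow$ (b), assume $\mathcal{Z}(E)$ acyclic and $\mathcal{S}(E)$ Cohen--Macaulay; then $\mathcal{Z}(E)$ is a finite resolution of $\mathcal{S}(E)$ by the $\mathcal{Z}_i$, and the hypercohomology spectral sequence $H^q_{\mathfrak{M}}(\mathcal{Z}_p)\Rightarrow H^{q-p}_{\mathfrak{M}}(\mathcal{S}(E))$ converges. Since $\mathcal{S}(E)$ is Cohen--Macaulay its local cohomology is concentrated in the single degree $\dim\mathcal{S}(E)$; comparing this with the lowest nonvanishing local-cohomology strand of each $\mathcal{Z}_p$ (which sits in cohomological degree $n+\mathrm{depth}_R Z_p$ and cannot be cancelled) forces first $\dim\mathcal{S}(E)=d+e$, equivalently $\mathcal{F}_0$, and then $n+\mathrm{depth}_R Z_p-p\ge d+e$, i.e. $\mathrm{depth}_R Z_p\ge d-n+p+e$ for all $p$. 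By the reformulation above this is exactly $SD_e$.

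The main obstacle is the bookkeeping that keeps the shift $+e$ consistently in place. For ideals the equivalence ``$\mathcal{Z}$ acyclic and $\mathcal{S}$ Cohen--Macaulay $\Longleftrightarrow$ sliding depth'' is classical and $\mathcal{F}_0$ is vacuous; for a module of rank $e>1$ one must simultaneously isolate $\mathcal{F}_0$ as the exact extra hypothesis that pins down $\dim\mathcal{S}(E)=d+e$ (so that ``depth equals dimension'' is available on both sides) and verify that the Koszul-cycle depth estimates, the Acyclicity Lemma step, and the local-cohomology spectral sequence all close up with this module-theoretic shift. Pushing the inductive, localization-based acyclicity argument through with only $\mathcal{F}_0$ in hand, rather than the stronger condition $\mathcal{F}_1=G_\infty$, is where the argument is most delicate.
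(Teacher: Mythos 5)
This statement is quoted in the paper from \cite{HSV2} as background; the paper itself gives no proof, so there is no internal argument to compare yours against. What you have written is a reconstruction of the original Herzog--Simis--Vasconcelos proof: translate $SD_e$ into depth bounds on the Koszul cycle modules $Z_i$, get (b)$\Rightarrow$(a) by localizing, inducting on $\dim R$, and applying the Peskine--Szpiro Acyclicity Lemma strand by strand, and get (a)$\Rightarrow$(b) by running local cohomology over the resolution $\mathcal{Z}(E)\to\mathcal{S}(E)\to 0$. The skeleton is right, and the role you assign to $\mathcal{F}_0$ is the correct one: it pins down $\dim\mathcal{S}(E)=d+e$ and forces $n\le d+e$, which is exactly what makes the bounds $\mathrm{depth}_R Z_i\ge d-n+i+e\ge i$ usable in the Acyclicity Lemma.

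Two steps are asserted where the actual work lies. First, in (a)$\Rightarrow$(b) the phrase that the lowest nonvanishing local cohomology of each $\mathcal{Z}_p$ ``cannot be cancelled'' is not automatic: in the spectral sequence $H^q_{\mathfrak{M}}(\mathcal{Z}_p)\Rightarrow H^{q-p}_{\mathfrak{M}}(\mathcal{S}(E))$ one must check that the incoming and outgoing differentials at the relevant corner vanish before concluding that the term survives to the abutment; this corner-chasing is precisely what produces $\mathrm{depth}_R Z_p\ge d-n+p+e$, and without it the harder implication is not established. Second, the equivalence of $SD_e$ (a condition on the homology modules $H_i(S_+;\mathcal{S}(E))_i$) with the cycle bounds via $0\to B_i\to Z_i\to H_i\to 0$ and $0\to Z_{i+1}\to K_{i+1}\to B_i\to 0$ needs a base case (the top nonvanishing cycle module must be free, or at least of maximal depth) and an argument in each direction, since the depth lemma only yields one-sided estimates from each sequence; you invoke the equivalence in both implications, so both directions must be supplied. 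These are fixable and standard, but as written the proposal is a sketch of the proof in the cited source rather than a complete argument.
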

The following corollary gives the Cohen-Macaulayness of $\mathcal{S}(I)$
and $I$ is of linear type once one has the depth bound of $I^{n}$
by using Theorem \ref{U0}.
\begin{cor}
\label{HSV6.3}\cite{HSV2} Let $R$ be a Cohen-Macaulay ring and
$E$ be a $R$-module with sliding depth and $G_{\infty}$ conditions.
Then $\mathcal{S}(E)$ is Cohen-Macaulay and $E$ is an ideal of linear
type.
\end{cor}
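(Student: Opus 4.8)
The plan is to obtain both statements from Theorem~\ref{HSV6.2}, using the sliding depth and $G_{\infty}$ hypotheses to verify its criterion, and then to upgrade ``$\mathcal{S}(E)$ Cohen-Macaulay'' to ``$E$ of linear type'' via the approximation complex. First I would rephrase the hypotheses in the form demanded by Theorem~\ref{HSV6.2}(b). By the stated convention the unqualified sliding depth condition is exactly $SD_{e}$ with $e=\mathrm{rank}(E)$, so half of (b) is immediate. For the remaining half, $\mathcal{F}_{0}$, I would argue that it already follows from $G_{\infty}$: over a Cohen-Macaulay local ring $\dim R_{P}=\mathrm{height}\,P$, a module with a rank is free at every minimal prime, and at every prime $P$ with $\mathrm{height}\,P\geq 1$ the condition $G_{\infty}$ gives $\mu(E_{P})\leq \mathrm{height}\,P+e-1$; hence $E$ satisfies $\mathcal{F}_{1}$, a fortiori $\mathcal{F}_{0}$. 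Therefore Theorem~\ref{HSV6.2}(b) holds, and (a) follows: $\mathcal{Z}(E)$ is acyclic and $\mathcal{S}(E)$ is Cohen-Macaulay, which is the first assertion.

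For the linear type assertion the operative fact now in hand is the acyclicity of $\mathcal{Z}(E)$. I would combine it with $\mathcal{F}_{1}$ (equivalently $G_{\infty}$) to force the canonical surjection $\mathcal{S}(E)\to\mathcal{R}(E)$ to be an isomorphism. One concrete route is through Theorem~\ref{H-S-V5_6}: its blanket hypotheses ask that $E$ be torsion-free --- which the depth-rank assumptions already guarantee --- together with $\mathrm{pd}E<\infty$ or $R$ a normal domain; granting this, acyclicity of $\mathcal{Z}(E)$ supplies a Bourbaki sequence $0\to I\to F\to E\to 0$ with $I$ generated by a proper sequence and the generators of $F$ a regular sequence of $1$-forms on $\mathcal{S}(E)$. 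Since $G_{\infty}$ passes to $I$ (iterating Lemma~\ref{G_l} through the generic Bourbaki construction), the ideal $I$, generated by a proper sequence and satisfying $G_{\infty}$, is of linear type; transferring linear type back to $E$ --- for instance through the equivalence of Theorem~\ref{EtoI}(1)(b), whose side condition $\mathrm{grade}\,\mathcal{R}(E)_{+}\geq e$ is automatic here since $E$ has positive rank and the pertinent depths are positive --- yields $\mathcal{R}(E)=\mathcal{S}(E)$. In the absence of the projective dimension or normality hypothesis one instead quotes the module analog of the classical criterion ``$\mathcal{Z}$ acyclic and $\mathcal{F}_{1}$ imply linear type.''

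I expect the genuine obstacle to lie in this second step, the passage from ``$\mathcal{Z}(E)$ acyclic and $\mathcal{F}_{1}$'' to ``$T_{R}=0$.'' The condition $\mathcal{F}_{1}$ must be used in an essential way to kill torsion coming from the primes where $E$ is not free, and one must be careful about which ambient hypothesis legitimizes the passage to a Bourbaki ideal and the proper-sequence argument; marrying the approximation complex cleanly with the generic Bourbaki machinery is the delicate point. By contrast the first step is bookkeeping: unwinding the definition of $SD_{e}$ and checking $G_{\infty}\Rightarrow\mathcal{F}_{1}\Rightarrow\mathcal{F}_{0}$ via the dimension formula are routine.
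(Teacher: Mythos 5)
The paper offers no proof of this statement: it appears in the Preliminaries as a quotation of a result from \cite{HSV2}, so there is no internal argument to compare against, and your reconstruction has to be judged on its own terms. It is essentially the standard derivation, and the first half is sound: unqualified sliding depth is $SD_{e}$ by the paper's convention, and $G_{\infty}$ does give $\mathcal{F}_{1}$ (hence $\mathcal{F}_{0}$), since a module with a rank over a Cohen--Macaulay ring is free at the minimal primes and $\dim R_{P}=\mathrm{height}\,P$ elsewhere; Theorem \ref{HSV6.2} then yields acyclicity of $\mathcal{Z}(E)$ and Cohen--Macaulayness of $\mathcal{S}(E)$. In the second half, your primary route through Theorem \ref{H-S-V5_6} is the weak link: that theorem imports hypotheses (torsion-freeness of $E$ together with $\mathrm{pd}\,E<\infty$ or $R$ normal) that are not among the corollary's assumptions, and your claim that torsion-freeness ``follows from the depth-rank assumptions'' is not justified, since sliding depth constrains the higher Koszul homology of $\mathcal{S}(E)$ rather than $\mathrm{depth}\,E_{P}$ directly; moreover an ideal generated by a proper sequence is not automatically of linear type, so even on that route you are implicitly invoking the criterion ``$\mathcal{Z}$ acyclic plus $\mathcal{F}_{1}$ implies linear type.'' That criterion, which you only name as a fallback in your last sentence, is in fact the correct and sufficient tool --- it is how \cite{HSV2} concludes --- and citing it directly makes the entire Bourbaki detour, with its extraneous hypotheses and the confused remark about the side condition $\mathrm{grade}\,\mathcal{R}(E)_{+}\geq e$, unnecessary. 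So: right architecture and a correct first step; the second step stands only if you replace the Theorem \ref{H-S-V5_6} route by a direct appeal to the $\mathcal{Z}$-acyclicity-plus-$\mathcal{F}_{1}$ linear-type theorem.
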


\section{Rees Algebras of modules}

We are now ready to state and prove the main theorem. 
\begin{thm}
\label{linear}Let $R$ be a local Gorenstein ring of dimension $d$,
and $E$ be a finite generated orientable module with condition $G_{\infty}$
and $\mbox{rank}E=e$. Let $I\simeq E''/F$ be a generic Bourbaki ideal
of $E$. If $\mbox{grade}I\geq2$ and $\mathrm{depth}E^{n}\geq d-n$
for $1\leq n\leq l-e$ where $l=\mathcal{\ell}(E)$, then $E$ is
of linear type and $\mathcal{R}(E)$ is Cohen-Macaulay.\end{thm}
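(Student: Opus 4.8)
The plan is to pass to a generic Bourbaki ideal $I\simeq E''/F$ of $E$ (over $R''=R(\{z_{ij}\})$), prove that $I$ satisfies $G_{\infty}$ and sliding depth, and then read off the two conclusions. Granting those two properties of $I$: since $R''$ is Gorenstein, hence Cohen-Macaulay, Corollary \ref{HSV6.3} shows $\mathcal{S}(I)$ is Cohen-Macaulay and $I$ is of linear type, so $\mathcal{R}(I)=\mathcal{S}(I)$ is Cohen-Macaulay; then Theorem \ref{EtoI}(1)(a) gives that $\mathcal{R}(E)$ is Cohen-Macaulay and Theorem \ref{EtoI}(1)(b) gives that $E$ is of linear type. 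So the whole theorem reduces to the two assertions about $I$.

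The condition $G_{\infty}$ for $I$ comes from $E$: realize $I$ by successively factoring out the generic $1$-forms $x_{1},\dots,x_{e-1}$, passing through $E''/(x_{1})$, $E''/(x_{1},x_{2})$, and finally $E''/(x_{1},\dots,x_{e-1})=E''/F\simeq I$; since $E$ has rank $\geq 2$ and satisfies $G_{\infty}$, Lemma \ref{G_l}(c) shows each intermediate quotient again satisfies $G_{\infty}$ (its statement covers $G_{l}$ for every $l$), and Lemma \ref{G_l}(a) shows the analytic spread drops by $1$ at each step, so $\ell(I)=\ell(E)-(e-1)=l-e+1$. To get sliding depth of $I$ I would apply Theorem \ref{U0} over $R''$ (Gorenstein local of dimension $d$) to the ideal $I$, with $\mathrm{grade}\,I=g\geq 2$ and $\ell(I)=l-e+1$: its hypothesis is $\mathrm{depth}\,I^{n}\geq d-g-n+2$ for $1\leq n\leq \ell(I)-g+1=l-e-g+2$. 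Since $g\geq 2$, this range of $n$ is contained in $[1,l-e]$ and there $d-n\geq d-g-n+2$, so it suffices to prove
\[
\mathrm{depth}\,I^{n}\ \geq\ d-n\qquad\text{for }1\leq n\leq l-e .
\]

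This displayed bound is the technical core. I would deduce it from the hypothesis $\mathrm{depth}\,E^{n}\geq d-n$ ($1\leq n\leq l-e$) by iterating a one-step transfer: for a generic $1$-form $x$ on a torsion-free module $M$ of rank $\geq 2$, the element $x$ is a nonzerodivisor on $\mathcal{R}(M)$ and $\mathcal{R}(M)/(x)\cong\mathcal{R}(M/(x))$, which yields in each degree a short exact sequence $0\to M^{\,n-1}\xrightarrow{\,x\,}M^{\,n}\to (M/(x))^{n}\to 0$. Applying this with $M=E''$ and using that the faithfully flat map $R\to R''$ preserves depth, so $\mathrm{depth}\,(E'')^{m}=\mathrm{depth}\,E^{m}\geq d-m$, the depth lemma propagates the bound along the sequence, keeping $\mathrm{depth}\,(E''/(x_{1},\dots,x_{j}))^{n}\geq d-n$ for $1\leq n\leq l-e$ at every stage $j$; the bookkeeping point is that $l-e$ equals $\ell-\mathrm{rank}$ for $E''$ and, by Lemma \ref{G_l}(a), for each quotient $E''/(x_{1},\dots,x_{j})$ as well, so the same range of $n$ carries through. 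After $e-1$ steps one reaches $E''/F\simeq I$ and obtains the displayed inequality.

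The step I expect to be the main obstacle is the one-step identification used above: that $x$ is a nonzerodivisor on $\mathcal{R}(E'')$ and that $\mathcal{R}(E'')/(x)\cong\mathcal{R}(E''/(x))$ — equivalently, that $\mathcal{R}(E'')/(x)$ is $R''$-torsion-free — without circularly invoking the Cohen-Macaulayness or linear type we are trying to establish, since as stated Theorem \ref{EtoI}(2) supplies this only once (1)(a) or (1)(b) is already known. This has to be pulled from the structure theory of generic Bourbaki sequences of Simis, Ulrich and Vasconcelos: the hypothesis that the generic Bourbaki ideal $I$ exists already forces $E$ and every intermediate quotient $E''/(x_{1},\dots,x_{j})$ to be torsion-free, and it is this torsion-freeness — together with the acyclicity considerations behind Theorem \ref{H-S-V5_6} — that makes each single-step passage $\mathcal{R}(E'')\twoheadrightarrow\mathcal{R}(E''/(x))$ become an isomorphism on killing $x$ while keeping $x$ regular. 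Once that is secured, the iteration above, Theorem \ref{U0}, Corollary \ref{HSV6.3}, and Theorem \ref{EtoI} combine to finish the proof.
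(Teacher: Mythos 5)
Your overall route is the paper's route: reduce to the generic Bourbaki ideal $I$, establish $G_{\infty}$ via Lemma \ref{G_l} and sliding depth via Theorem \ref{U0} from the bound $\mathrm{depth}\,I^{n}\geq d-n$ for $1\leq n\leq l-e$, then conclude with Corollary \ref{HSV6.3} and Theorem \ref{EtoI}. You also correctly isolate the one step on which everything hinges: that $(E'')^{n}/x(E'')^{n-1}$ is torsion-free, i.e.\ equals $(E''/(x))^{n}$, so that the short exact sequence $0\to x(E'')^{n-1}\to (E'')^{n}\to (E'')^{n}/x(E'')^{n-1}\to 0$ actually transfers the depth bound to the powers of the quotient module rather than to something that merely surjects onto them.

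But you do not close that step, and the way you propose to close it does not work. The existence of the generic Bourbaki ideal only says that the degree-one quotient $E''/F$ is torsion-free; it says nothing about torsion-freeness of $(E'')^{n}/x(E'')^{n-1}$ for $n\geq 2$, which is exactly where the module case differs from the ideal case, and Theorem \ref{EtoI}(2) (the identification $\mathcal{R}(E'')/(x)\cong\mathcal{R}(\overline{E})$) is only available \emph{after} Cohen--Macaulayness or linear type is known --- the circularity you yourself flag. The paper's actual device, which is absent from your argument, is an induction on $\dim R$: by the inductive hypothesis the theorem holds for all proper localizations, so $E''$ (hence $\overline{E}$) is of linear type in codimension $\leq d-1$, which gives $(E'')^{n}/x(E'')^{n-1}=\mathcal{S}_{n}(E'')/x\mathcal{S}_{n-1}(E'')=(E''/xE'')^{n}$, hence torsion-freeness, locally in codimension $d-1$; at the maximal ideal one instead uses the exact sequence together with $\mathrm{depth}\,(E'')^{n}\geq d-n$ and $x(E'')^{n-1}\cong (E'')^{n-1}$ to get $\mathrm{depth}\,(E'')^{n}/x(E'')^{n-1}\geq d-n\geq d-(l-e)\geq 1$ (using $l\leq d+e-1$), and the two cases combine to give torsion-freeness everywhere. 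Without this induction your iterated one-step transfer has no starting point, so the proof as written has a genuine gap precisely at its core. (A minor additional remark: the paper reduces without loss of generality to $\mathrm{rank}\,E=2$ rather than iterating $e-1$ times, but that is only presentational.)
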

\begin{proof}
We induct on dimension of $R$. Suppose $E_{p}$ is linear on $\mbox{dim}R_{p}\leq d-1$
and $\mathcal{R}(E_{p})$ is Cohen- Macaulay. Let $I\simeq E''/F$
be a generic Bourbaki ideal of $E$. We would like to apply Theorem
\ref{EtoI}, hence we will show $\mbox{depth}I^{n}\geq d-n$. Without
lost of generality, we assume $\mbox{rank}E=2$, hence it is sufficient
to show $\mbox{depth}(E''/x)^{n}\geq d-n$ for $1\leq n\leq l-2$, where $x$ is a regular element of $\mathcal{R}(E'')$.

Claim: $(E'')^{n}/x(E'')^{n-1}=(E^{''}/x)^{n}$, i.e. $(E'')^{n}/x(E'')^{n-1}$
is torsion-free for $1\leq n\leq l-2$.

With the claim and the exact sequence
\begin{equation}
0\rightarrow x(E'')^{n-1}\rightarrow(E'')^{n}\rightarrow(E'')^{n}/x(E'')^{n-1}\rightarrow0,\label{xE}
\end{equation}
we apply the depth condition of $E''$ which is as the depth condition
of $E$. This gives us $\mbox{depth}I^{n}=\mbox{depth}(E''/x)^{n}=\mbox{depth}(E'')^{n}/x(E'')^{n-1}\geq d-n$
for $1\leq n\leq l-2$. By Lemma \ref{G_l} and Theorem \ref{U0},
$I$ is sliding depth and satisfies $G_{\infty}$. By the Corollary
\ref{HSV6.3}, $I$ is of linear type and $\mathcal{R}(I)$ is Cohen-Macaulay. Then by Theorem \ref{EtoI}, $E$ is of linear type and $\mathcal{R}(E)$
is Cohen-Macaulay.

Proof of claim: Notice $(E'')^{n}/x(E'')^{n-1}$ is torsion-free if
and only if for any $p\in\mbox{Spec}R$, we have $\mbox{depth}((E'')^{n}/x(E'')^{n-1})_{p}\geq1$.
Since $E$ is of linear type on codimension $d-1$, $E''$ is
also of linear type on codimension $d-1$. We have 
\[
(E'')^{n}/x(E'')^{n-1}=\mathcal{S}_{n}(E'')/x\mathcal{S}_{n-1}(E'')=(E''/xE'')^{n}=[\mathcal{R}(E''/xE'')]_{n},
\]
hence $(E'')^{n}/x(E'')^{n-1}$ is torsion-free on codimension $d-1$.
Since we assume $\mbox{rank}E=2,$ $\mathcal{\ell}(E)=l\leq d+2-1=d+1$ and depth condition of $E$ passing through $E''$. By using the exact
sequence \ref{xE}, $\mbox{depth}(E'')^{n}/x(E'')^{n-1}\geq d-n\geq d-(l-2)\geq1$.
This implies $(E'')^{n}/x(E'')^{n-1}$ is torsion-free.
\end{proof}
The following example is computed in Macaulay 2 \cite{GS}.
\begin{example}
$R=k[x,y,z]_{(x,y,z)}$ where $k$ is a field. Let $E=(x^{2},xy)\oplus(y,z)$
be a module of rank 2 satisfying $G_{\infty}$ condition with $l(E)=4$.
The generic Bourbaki ideal of $E$ is $(z_{11}x^{4}+z_{21}x^{3}y,\mbox{ }z_{11}x^{3}y+z_{21}x^{2}y^{2},\mbox{ }z_{31}y^{2}+z_{41}yz,\mbox{ }z_{31}yz+z_{41}z^{2})$
in $R(z_{11},z_{21},z_{31},z_{41})$ with grade at least 2. The depth
conditions that we need to check are when $n=1$ and $n=2$. We have
depth $E^{1}=2\geq3-1$ and depth$E^{2}=2\geq3-2$. Hence $E$ is
of linear type and is Cohen-Macaulay.
\end{example}
We would like to weaken the depth bound on $E^{n}$ and $G_{\infty}$
condition to more general modules. One key point that one can
pass from a module $E$ to its generic Bourbaki ideal is to show the
$(E'')^{n}$ is still torsion-free after modulo a regular sequence of
$\mathcal{R}(E'')$. The following lemma shows when the torsion-freeness
will be provided.
\begin{lem}
\label{torsionFree}Let $R$ be a Gorenstein local ring. Let $E$
be a finitely generated torsion free $R$-module with rank $e>0$
and $E$ is orientable and $E$ is $G_{l-e+1}$. Let $I$ be a generic
Bourbaki ideal of $E$ having height $g=2$ and $\mathrm{depth}\, E^{n}\geq d-n$
for $1\leq n\leq l-e$. We write $\overline{E}=E''/(x)$. Then $\overline{E}$
is an ideal of linear type on codimension $l-e$ and $\overline{E}^{n}=(E'')^{n}/x(E'')^{n-1}$.\end{lem}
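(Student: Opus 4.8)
The plan is to follow the pattern of the ``proof of claim'' inside Theorem~\ref{linear}: show that each quotient $(E'')^{n}/x(E'')^{n-1}$ is torsion-free by checking it separately in small and in large codimension, and then read off the two assertions. By the step-by-step form of the generic Bourbaki construction --- peeling off one generic element $x_{j}=\sum_{i}z_{ij}a_{i}$ at a time, using Lemma~\ref{G_l} to carry along the $G$-condition together with the drop $\ell\mapsto\ell-1$, and using the torsion-freeness proved at each stage to identify the intermediate quotient with an ideal module of rank one smaller --- it suffices to treat $\mathrm{rank}\,E=2$. So assume $e=2$, $F=R''x$ with $x$ a nonzerodivisor of degree one on $\mathcal{R}(E'')$ (a generic element avoids the relevant associated primes, cf.~\cite{SUV}); then $l=\ell(E)\le d+1$, and, since $R\to R''$ is faithfully flat local, $E''$ is orientable over the Gorenstein ring $R''$, satisfies $G_{l-1}$, its generic Bourbaki ideal $I$ has $\mathrm{grade}\,I=2$, and $\mathrm{depth}\,(E'')^{n}\ge d-n$ for $1\le n\le l-2$. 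Moreover, as $x$ is regular on $\mathcal{R}(E'')$ we have $x(E'')^{n-1}\cong(E'')^{n-1}$, so from
\[
0\longrightarrow x(E'')^{n-1}\longrightarrow(E'')^{n}\longrightarrow(E'')^{n}/x(E'')^{n-1}\longrightarrow 0
\]
and the depth bounds we get $\mathrm{depth}\,\bigl((E'')^{n}/x(E'')^{n-1}\bigr)\ge d-n$ for $1\le n\le l-2$.

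The key local input is that $E''_{p}$ is of linear type for every prime $p$ of $R''$ with $1\le\mathrm{height}\,p\le l-2$. If $\mathrm{height}\,p\le 1$ this holds because the $G_{2}$ part of $G_{l-1}$ forces $\mu(E''_{p})\le\mathrm{rank}\,E''_{p}$, so $E''_{p}$ is free. If $2\le\mathrm{height}\,p\le l-2$ I would apply the already-proved Theorem~\ref{linear} to $E''_{p}$ over $R''_{p}$: the ring $R''_{p}$ is Gorenstein and $E''_{p}$ is orientable; $E''_{p}$ satisfies $G_{\infty}$, because any $q\subseteq p$ with $\dim R''_{q}\ge 1$ has $\dim R''_{q}\le\mathrm{height}\,p\le l-2$, exactly the range controlled by $G_{l-1}$; a generic Bourbaki ideal of $E''_{p}$ still has grade $\ge 2$ (if $p\not\supseteq I$ then $E''_{p}$ is free, while if $p\supseteq I$ its grade is $\mathrm{height}(IR''_{p})$, the minimum of the heights of the minimal primes of $I$ lying in $p$, each $\ge\mathrm{height}\,I=2$); and $\mathrm{depth}\,(E''_{p})^{n}\ge\mathrm{height}\,p-n$ for $1\le n\le\ell(E''_{p})-2$, since that range lies in $1\le n\le l-2$ and $\mathrm{depth}\,M_{p}\ge\mathrm{depth}\,M-\dim(R''/p)$ applied to the global bound gives $(d-n)-(d-\mathrm{height}\,p)=\mathrm{height}\,p-n$ ($R''$ being catenary and equidimensional). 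Thus $E''_{p}$, and hence $\overline{E}_{p}=(E''/xE'')_{p}$, is of linear type, so $\bigl((E'')^{n}/x(E'')^{n-1}\bigr)_{p}=[\mathcal{R}(\overline{E}_{p})]_{n}$ is torsion-free over $R''_{p}$ and of positive depth; in particular $\overline{E}$ is of linear type on codimension $l-e$.

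To globalise, fix $1\le n\le l-2$ and a prime $p$ of $R''$ with $\mathrm{height}\,p\ge 1$. If $\mathrm{height}\,p\le l-2$ then $\bigl((E'')^{n}/x(E'')^{n-1}\bigr)_{p}$ has depth $\ge 1$ by the preceding paragraph; if $\mathrm{height}\,p\ge l-1\ge n+1$ then
\[
\mathrm{depth}\,\bigl((E'')^{n}/x(E'')^{n-1}\bigr)_{p}\ \ge\ \mathrm{depth}\,\bigl((E'')^{n}/x(E'')^{n-1}\bigr)-\dim(R''/p)\ \ge\ (d-n)-(d-\mathrm{height}\,p)\ =\ \mathrm{height}\,p-n\ \ge\ 1 .
\]
Hence $(E'')^{n}/x(E'')^{n-1}$ has no associated prime of positive height, so it is torsion-free and therefore equals its torsion-free quotient $[\mathcal{R}(\overline{E})]_{n}=\overline{E}^{n}$. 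With $n=1$ this shows $\overline{E}=E''/F$ is torsion-free of rank one; it is orientable ($\overline{E}^{**}\cong(\wedge^{2}E'')^{**}\cong R''$), hence embeds in $R''$ as an ideal, which is a generic Bourbaki ideal of $E$.

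The genuinely delicate point I expect is the descent of the hypotheses of Theorem~\ref{linear} to the localisations $E''_{p}$ --- above all that a generic Bourbaki ideal of $E''_{p}$ retains grade at least $2$, which requires the precise interaction of the generic Bourbaki construction with localisation (Proposition~\ref{S-U-V 5.3} and \cite{SUV}) --- and, relatedly, making the reduction to $e=2$ rigorous, because the depth hypothesis on an intermediate rank-$(e-1)$ quotient is itself an instance of the statement being proved, so that reduction is really an induction on the rank. Once the local linear-type statement is secured, the codimension split and the depth count are routine.
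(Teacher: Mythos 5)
Your proposal is correct and follows essentially the same route as the paper's own proof: apply Theorem \ref{linear} locally to get that $E''$ (hence $\overline{E}$) is of linear type in codimension $l-e$, which identifies $(E'')^{n}/x(E'')^{n-1}$ with $\overline{E}^{n}$ there, and then use the depth hypothesis together with the short exact sequence $0\rightarrow x(E'')^{n-1}\rightarrow(E'')^{n}\rightarrow(E'')^{n}/x(E'')^{n-1}\rightarrow0$ to rule out associated primes of height greater than $l-e$. The extra care you take in verifying that the hypotheses of Theorem \ref{linear} localize and that the reduction to rank $2$ is really an induction on rank is detail the paper leaves implicit, not a different argument.
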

\begin{proof}
Theorem \ref{linear} gives $E''$ is of linear type on codimension
$l-e$. Then by Theorem \ref{EtoI}, we have $\bar{E}$ is also of
linear type on codimension $l-e$. Hence $\bar{E}^{n}=(E'')^{n}/x(E'')^{n-1}$
on codimension $l-e$. For prime ideal $p$ with $\mbox{height}(p)>l-e$,
we use depth conditions of $E$ which can be passed to $E''$ and
the exact sequence $0\rightarrow(x(E'')^{n-1})_{p}\rightarrow((E'')^{n})_{p}\rightarrow((E'')^{n}/x(E'')^{n-1})_{p}\rightarrow0$
to show $\mbox{depth}((E'')^{n}/x(E'')^{n-1})_{p}\geq d-n\geq d-(l-e)\geq1$
and this gives $((E'')^{n}/x(E'')^{n-1})_{p}$ is $R$-torsion free on
$R_{p}$. We obtain $(E'')^{n}/x(E'')^{n-1}$ is $R$-torsion free and
hence $\bar{E}^{n}=(E'')^{n}/x(E''){}^{n-1}$.
\end{proof}
We have the result of the module case similar to the ideal case in
the following theorem. 
\begin{thm}
\label{CM}Let $R$ be a Gorenstein local ring of dimension $d$ with
infinite residue field. Let $E$ be a finitely generated torsion free
$R$-module with rank $e>0$ and $\mathcal{\ell}(E)=l$. Assume $E$
has condition $G_{l-e+1}$ with $l-e+1\geq2$, and $E$ is orientable.
Let $I$ be a generic Bourbaki ideal of $E$ having height $g\geq2$.
If $r(E)\leq l-g-e+1$, and $\mathrm{depth}\, E^{n}\geq d-g-n+2\mbox{ for }1\leq n\leq l-e-g+1$,
then $\mathcal{R}(E)$ is Cohen-Macaulay.\end{thm}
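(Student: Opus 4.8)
The plan is to reduce the module statement to Theorem~\ref{J-U0} applied to a generic Bourbaki ideal $I$ of $E$, exactly as in the proof of Theorem~\ref{linear} but now tracking the numerical invariants through the reduction. First I would invoke Theorem~\ref{EtoI}: it suffices to prove that $\mathcal{R}(I)$ is Cohen-Macaulay, and once this is known the second part of that theorem retroactively justifies the identifications used along the way. So the real task is to verify that $I$ satisfies all the hypotheses of Theorem~\ref{J-U0} (with $g=\operatorname{height}I$, $\ell(I)=l-e+1$ by iterating Lemma~\ref{G_l}(a), and $r(I)\le r(E)\le l-g-e+1=\ell(I)-g$ by iterating Lemma~\ref{G_l}(b)). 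In particular $s:=\ell(I)-g+1$ works, so we are in the range where $r(I)\le s$; since $E$ is $G_{l-e+1}$, Lemma~\ref{G_l}(c) gives that $I$ is $G_{\ell(I)}$, and by Corollary~\ref{J-U3-1} the Artin--Nagata hypotheses of Theorem~\ref{J-U0} are then automatic. The only remaining hypothesis of Corollary~\ref{J-U3-1} to check is the depth bound $\operatorname{depth}I^{n}\ge d-g-n+2$ for $1\le n\le \ell(I)-g+1=l-e-g+1$.

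The heart of the proof is therefore transferring the depth bound $\operatorname{depth}E^{n}\ge d-g-n+2$ to $\operatorname{depth}I^{n}\ge d-g-n+2$, and this is exactly what Lemma~\ref{torsionFree} is designed for. I would argue by induction on $e=\operatorname{rank}E$, reducing to the case $e=2$ as in Theorem~\ref{linear}, where $I\simeq \overline{E}=E''/(x)$ for a single regular $1$-form $x$ on $\mathcal{R}(E'')$. Lemma~\ref{torsionFree} (applied with the hypotheses at hand, noting $l-e\ge l-e-g+1$ so the required depth bounds hold a fortiori and $G_{l-e+1}$ certainly implies $G_{l-e+1}$) gives $\overline{E}^{n}=(E'')^{n}/x(E'')^{n-1}$ for $n$ in the relevant range. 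Then from the short exact sequence
\begin{equation*}
0\rightarrow x(E'')^{n-1}\rightarrow (E'')^{n}\rightarrow \overline{E}^{n}\rightarrow 0
\end{equation*}
and the fact that the depth condition on $E^{n}$ passes to $(E'')^{n}$ (since $R''=R(\{z_{ij}\})$ is a faithfully flat local extension with the same-dimension behavior used throughout), the depth lemma yields $\operatorname{depth}\overline{E}^{n}\ge \min\{\operatorname{depth}(E'')^{n},\operatorname{depth}(E'')^{n-1}+1\}\ge d-g-n+2$. Iterating over a sequence $x_{1},\dots,x_{e-1}$ handles general $e$, using Theorem~\ref{EtoI}(2) to know these form a regular sequence on $\mathcal{R}(E'')$.

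Having verified all hypotheses of Corollary~\ref{J-U3-1} for $I$ (height $g\ge 2$, $G_{\ell(I)}$, $r(I)\le \ell(I)-g+1$, and the depth bounds), we conclude $\mathcal{R}(I)$ is Cohen-Macaulay, and then Theorem~\ref{EtoI}(1)(a) gives that $\mathcal{R}(E)$ is Cohen-Macaulay.

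I expect the main obstacle to be the bookkeeping in the induction on $\operatorname{rank}E$: at each step one passes from $E$ to $\overline{E}=E''/(x)$ and must check that \emph{all} the hypotheses of the theorem are inherited --- not just the depth bounds but also that $\overline{E}$ is again torsion-free, orientable, satisfies the correct $G$-condition, and that $\ell$, $g$, and $r(E)$ shift in the predicted way so that the inequality $r(\overline{E})\le \ell(\overline{E})-g-(e-1)+1$ is maintained. The orientability and torsion-freeness of the intermediate modules, and the compatibility of $g=\operatorname{height}I$ with passing to a Bourbaki ideal, are the delicate points; here Proposition~\ref{S-U-V 5.3} and the structural results of \cite{SUV} on generic Bourbaki ideals, together with Lemma~\ref{torsionFree}, are what make the induction go through. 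A secondary subtlety is that Lemma~\ref{torsionFree} is stated for $g=2$, so one must either extend it or reduce to that case by a standard localization/specialization argument on the generic Bourbaki ideal.
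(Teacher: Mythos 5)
Your overall strategy---pass to a generic Bourbaki ideal $I$, transfer the depth bounds via Lemma \ref{torsionFree} and the exact sequence $0\rightarrow x(E'')^{n-1}\rightarrow (E'')^{n}\rightarrow \overline{E}^{n}\rightarrow 0$, then apply a Johnson--Ulrich criterion---is the same skeleton the paper uses for the case $g=2$. But there are two genuine gaps. First, an off-by-one error in the numerics: with $\ell(I)=l-e+1$ you write $\ell(I)-g+1=l-e-g+1$, but in fact $\ell(I)-g+1=l-e-g+2$. Corollary \ref{J-U3-1} therefore requires $\mathrm{depth}\,I^{n}\geq d-g-n+2$ for $n$ up to $l-e-g+2$, which is one more value of $n$ than the hypothesis $\mathrm{depth}\,E^{n}\geq d-g-n+2$ for $1\leq n\leq l-e-g+1$ supplies; your route through Corollary \ref{J-U3-1} cannot close. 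The paper avoids this by invoking Theorem \ref{JU2} (the Gorenstein criterion) instead, whose requirements $r(I)\leq \ell(I)-g$ and $\mathrm{depth}\,I^{n}\geq d-n-g+2$ for $1\leq n\leq \ell(I)-g=l-e-g+1$ match the hypotheses of Theorem \ref{CM} exactly. Relatedly, your justification for applying Lemma \ref{torsionFree} (``$l-e\geq l-e-g+1$ so the bounds hold a fortiori'') is inverted: that lemma asks for the depth bound on the \emph{larger} range $1\leq n\leq l-e$, so nothing is a fortiori here.

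Second, you correctly flag that Lemma \ref{torsionFree} is stated only for $g=2$ and say one must ``either extend it or reduce to that case,'' but you do not actually do either, and this is where the paper takes a genuinely different route: for $g\geq 3$ it bypasses the Bourbaki machinery entirely, using the structure result of \cite{SUV} that $E\cong F\oplus L$ with $F$ free of rank $e-1$ and $L$ an $R$-ideal, so that $\mathcal{R}(E)=\mathcal{R}(L)[t_{1},\dots,t_{e-1}]$ and one applies Corollary \ref{J-U3-1} directly to $L$. Without that case split (or some substitute), your induction on $e$ has no way to handle $g\geq 3$. A smaller slip: the depth lemma applied to the displayed sequence gives $\mathrm{depth}\,\overline{E}^{n}\geq\min\{\mathrm{depth}\,(E'')^{n},\ \mathrm{depth}\,(E'')^{n-1}-1\}$, not $+1$; the numerical conclusion happens to survive, but the formula as written is wrong.
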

\begin{proof} For $g\geq3$, we have $E\cong F\oplus L$ where $F$ is
a free $R$-module of rank $e-1$ and $L$ is an $R$-ideal \cite{SUV}.
We have $\mathcal{R}(E)=\mathcal{R}(L)[t_{1},...,t_{e-1}]$ where
$t_{i}$ are variables. Then $\mathcal{R}(L)$ is Cohen-Macaulay if and only if
$\mathcal{R}(E)$ is Cohen-Macaulay. But all the conditions of $E$ pass through
$L$ and by Corollary \ref{J-U3-1}, we have $\mathcal{R}(L)$ is Cohen-Macaulay.
For the case $g=2$, we use induction on $e$ to show $\mathcal{R}(I)$
is Gorenstein. Then by Theorem \ref{EtoI}, we have $\mathcal{R}(E)$
is Cohen-Macaulay.

By Lemma \ref{torsionFree}, we have $\bar{E}^{n}=(E'')^{n}/x(E'')^{n-1}$,
then we have $\mathrm{depth}\bar{E}^{n}\geq d-n$ by the exact sequence
$0\rightarrow x(E'')^{n-1}\rightarrow (E'')^{n}\rightarrow (E'')^{n}/x(E'')^{n-1}\rightarrow0$
and the depth condition of $E$ passing through $E''$. Then by induction
on $e$, we have $\mathrm{depth}(E''/F)^{n}=I^{n}\geq d-n$. By Lemma
\ref{G_l}, $I$ satisfies $G_{l-e+1}$ with $l(I)=l-e+1$ and $r(I)\leq l-e+1-g$,
then by Theorem \ref{JU2}, we have $\mathcal{R}(I)$ is Gorenstein.
\end{proof}

\section{Application}

We are able to check if a module is of linear type and Cohen-Macaulay
if the minimal number of the generators of the module is small enough
with respect to the dimension of the ground ring $R$ and the rank
of the module, and the depth of the module is large enough. 
\begin{prop}
\label{minrank}Let $R$ be a Cohen-Macaulay local ring with dimension
$d$, $E$ be finitely generated $R$-module with $\mathrm{rank}(E)=e$.
Assume $E$ satisfies $G_{\infty}$, $\mu(E)\leq\mbox{min}\{e+3,d+e-1\}$,
and $\mathrm{depth}E\geq d-1$, then $E$ is of linear type and $\mathcal{R}(E)$
is Cohen-Macaulay.\end{prop}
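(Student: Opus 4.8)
The plan is to derive the statement from Corollary~\ref{HSV6.3}: since $E$ already satisfies $G_\infty=\mathcal F_1$, it is enough to prove that $E$ satisfies the unqualified sliding depth condition $SD_e$, for then $E$ is of linear type and $\mathcal R(E)=\mathcal S(E)$ is Cohen--Macaulay. First I would record the easy consequences of the hypotheses. Because $\ell(E)\le\mu(E)\le e+3$ we get $\ell(E)-e\le 3$, and because $\mathcal F_1$ forces $E_{\mathfrak p}$ to be free whenever $\dim R_{\mathfrak p}\le 1$ (a module of rank $e$ cannot be generated by fewer than $e$ elements), $E$ is torsion free and free in codimension at most one; in particular any kernel of a map out of a free module constructed from $E$ is torsion free. (The constraint $\mu(E)\le d+e-1$ is in any case automatic from $G_\infty$ once $E$ is not free.)

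Next I would translate $SD_e$ into depth estimates on finitely many modules. Write $q=\mu(E)$ and fix a presentation $R^{m}\xrightarrow{\varphi}R^{q}\to E\to 0$; computing the Koszul homology of the one-forms in its lowest internal degree gives
\[
\bigl(H_i(S_+;\mathcal S(E))\bigr)_i\;\cong\;Z_i:=\ker\Bigl(\textstyle\bigwedge^{i}R^{q}\longrightarrow\bigwedge^{i-1}R^{q}\otimes_R E\Bigr),
\]
and $SD_e$ is exactly the assertion that $\operatorname{depth}Z_i\ge d-q+i+e$ for all $i>0$. Tensoring with the total ring of quotients of $R$ identifies $Z_i$ generically with $\bigwedge^{i}$ of the $(q-e)$-dimensional kernel of $R^{q}\twoheadrightarrow E$; since $Z_i$ is torsion free this forces $Z_i=0$ for $i>q-e\le 3$, so only $1\le i\le q-e$ is at stake. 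For $i=1$ the sequence $0\to Z_1\to R^{q}\to E\to 0$ gives $\operatorname{depth}Z_1\ge\min\{d,\operatorname{depth}E+1\}\ge d$, comfortably above the required $d-q+1+e\ge d-2$.

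The crux --- and the step I expect to be the genuine obstacle --- is the bound $\operatorname{depth}Z_i\ge d-q+i+e$ for $i=2$ (needed as $\ge d-1$ when $q=e+3$, as $\ge d$ when $q=e+2$) and for $i=3$ (needed as $\ge d$ when $q=e+3$). The plan is a depth chase through the bottom internal strands of the complex $M^{*}(S_+;\mathcal S(E))$: the short exact sequences linking $Z_2$ (respectively $Z_3$) with the free modules $\bigwedge^{j}R^{q}$, the copies of $E$ appearing as $\bigwedge^{j}R^{q}\otimes E$, the symmetric powers $\mathcal S_j(E)$, and the lower homology modules such as $\bigl(H_1(S_+;\mathcal S(E))\bigr)_2$, should reduce everything to $\operatorname{depth}E\ge d-1$ together with the vanishing $Z_i=0$ for $i>q-e$ --- and $q-e\le 3$ is exactly what keeps this finite and the bounds just large enough. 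To control the symmetric-power and lower-homology contributions one is naturally led to induct on $d=\dim R$, as in the proof of Theorem~\ref{linear}, reducing their depths to the Cohen--Macaulayness of $\mathcal R(E_{\mathfrak p})$ and linear type of $E_{\mathfrak p}$ in codimension $<d$; the subtle point is propagating $\operatorname{depth}E\ge d-1$ to the relevant localizations, which is possible because the associated primes of $E$ have height at most one and the non-free locus of $E$ has codimension at least two.

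An alternative to establishing $SD_e$ directly is to pass to a generic Bourbaki ideal $I$ of $E$: one has $\mu(I)=\mu(E)-e+1\le 4$, hence $\ell(I)=\ell(E)-e+1\le 4$, and, transporting the depth hypotheses through the Bourbaki construction via Lemma~\ref{torsionFree}, one can invoke the ideal statement Corollary~\ref{J-U3-1} and conclude with Theorem~\ref{EtoI}. This route meets the same core difficulty, namely controlling $\operatorname{depth}E^{n}$ (equivalently $\operatorname{depth}Z_n$) for $n=2,3$.
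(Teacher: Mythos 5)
Your overall strategy is the same as the paper's: reduce to Corollary~\ref{HSV6.3} by verifying sliding depth together with $G_\infty$, observe that the syzygy module $U=Z_1$ of a minimal free presentation has rank $q-e\le 3$ so that only finitely many Koszul cycle modules are nonzero, and dispose of the case $i=1$ by the depth lemma applied to $0\to U\to R^q\to E\to 0$, which gives $\operatorname{depth}U=d$. Up to that point the two arguments coincide.

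However, the proposal stops exactly where the real work begins, and you say so yourself: the bounds $\operatorname{depth}Z_i\ge d-q+i+e$ for $i=2,3$ are announced as ``a depth chase through the bottom internal strands'' that ``should reduce everything to $\operatorname{depth}E\ge d-1$,'' but no such chase is carried out, and a naive one does not close: from $0\to Z_2\to\wedge^2R^q\to C\to 0$ one gets $\operatorname{depth}Z_2\ge\min\{d,\operatorname{depth}C+1\}$ with $C$ the image in $R^q\otimes E$, and nothing in the hypotheses directly controls $\operatorname{depth}C$; iterating only pushes the unknown further along. The missing idea --- which is the actual content of the paper's proof --- is to exploit the low rank of $U$ to \emph{identify} the top cycle modules rather than estimate them: since $\operatorname{rank}U=r\le 3$ one has $(\wedge^{r}U)^{**}\cong R$ and $(\wedge^{r-1}U)^{**}\cong U^{*}$, so the relevant strand of the approximation complex can be written as a short finite complex
$0\to(\wedge^{r}U)^{**}\otimes\mathcal S(F)\to\cdots\to U\otimes\mathcal S(F)\to\mathcal S(F)\to\mathcal S(E)\to 0$
whose terms are $R$, $U^{*}$, $U$ and free modules, all of known (maximal or near-maximal) depth because $\operatorname{depth}U=d$; the acyclicity lemma then yields exactness, hence the acyclicity of $\mathcal Z(E)$ and the sliding depth condition, and Corollary~\ref{HSV6.3} finishes. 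Without this identification (or some substitute for it) your $i=2,3$ estimates remain unproved, so the proposal as written has a genuine gap at its central step. Your alternative route through a generic Bourbaki ideal faces, as you note, the identical difficulty and is likewise not completed.
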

\begin{proof}
Let $0\rightarrow U\rightarrow F\rightarrow E\rightarrow0$ be an
exact sequence where $F$ is a free $R$-module with rank $\mu(E)$.
Then $U$ has rank $\mu(E)-e\leq 3$ and $\mbox{depth}U=d$. We may assume $\mbox{rank}U>0$. We would
like to show the $\mathcal{Z}(E)$ complex is acyclic when $0<$rank$U\leq3$.

When $\mbox{rank}U=1=\mu(E)-e$, we have $\mbox{dim}R=d\geq2$. Consider
the following sequence $0\rightarrow U\otimes \mathcal{S}(F)\rightarrow \mathcal{S}(F)\rightarrow \mathcal{S}(E)\rightarrow0$.
Notice that $\mathcal{S}(F)$ is a polynomial ring with $\mu(E)$
variables over the ring $R$ which is Cohen-Macaulay and $U\otimes\mathcal{S}(F)$
is a module over $\mathcal{S}(F)$, hence we have $\mbox{depth}U\otimes\mathcal{S}(F)=d\geq2$
and $\mbox{depth}\mathcal{S}(F)=d\geq1$. Therefore the sequence is
exact. The depth condition implies $E$ is sliding depth. With $G_{\infty}$,
we can apply Corollary \ref{HSV6.3}, hence $\mathcal{S}(E)$ is Cohen-Macaulay
and $E$ is of linear type, i.e. $\mathcal{R}(E)$ is Cohen-Macaulay.

When $\mbox{rank}U=2=\mu(E)-e$, we have $\mbox{depth}U=d\geq3$ and
$(\wedge^{2}U)^{**}\cong R$. Consider the following sequence $0\rightarrow(\wedge^{2}U)^{**}\otimes\mathcal{S}(F)\rightarrow U\otimes\mathcal{S}(F)\rightarrow\mathcal{S}(F)\rightarrow\mathcal{S}(E)\rightarrow0$.
As before, we have the depth conditions on this sequence and hence
it is exact. Therefore $\mathcal{S}(E)$ is Cohen-Macaulay and $E$
is of linear type, i.e. $\mathcal{R}(E)$ is Cohen-Macaulay. 

When $\mbox{rank}U=3$, we have $\mbox{depth}U=d\geq4$, $(\wedge^{3}U)^{**}\cong R$,
and $(\wedge^{2}U)^{**}\cong U^{*}$. Now use similar approach as
above. 
\end{proof}
\medskip{}

\begin{example}
Let $R=[x,y,z]_{(x,y,z)}$ and $E=(x^{2},xy)\oplus(yz,z^{2})$, then
$\mu(E)=4$ with rank$E=2$, and $G_{\infty}$ is true. The generic
Bourbaki ideal of $E$, $I=((z_{11}x^{2}+z_{21}xy)(x^{2},xy),(z_{31}yz+z_{41}z^{2})(yz,z^{2}))$
has height 2, and depth$E^{1}=2\geq3-1$. Then by the Proposition
\ref{minrank}, $E$ is of linear type and $\mathcal{R}(E)$ is Cohen-Macaulay. 
\end{example}
The following corollaries are applications of Proposition \ref{S-U-V 5.3}
and Theorem \ref{H-S-V5_6}. We can weaken the depth assumptions if
the module is free locally on certain dimension. 
\begin{cor}
\label{CM2}Let $R$ be a Cohen-Macaulay local ring, $E$ be an
ideal module with $\ell(E)=l$ and $\mathrm{rank}E=e$. Let $U$ be
a submodule of $E$ such that $\mbox{grade}E/U=l-e-1\geq2$. Assume
$E$ has condition $G_{l-e+1}$ and $E$ is free on codimension $l-e-2$.
If $1\leq r(E)\leq s$ for some integer $s$ and $\mathrm{depth}E^{n}\geq d-(l-e+1)+s-n+1$
for $1\leq n\leq s$, then $\mathcal{R}(E)$ is Cohen-Macaulay.\end{cor}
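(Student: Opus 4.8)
The plan is to reduce to the generic Bourbaki ideal and then invoke Johnson--Ulrich (Theorem \ref{J-U0}). First I would pass to a generic Bourbaki ideal $I \simeq E''/F$ of $E$ with respect to the submodule $U$, where $F = \sum_{j=1}^{e-1} R'' x_j$ is free of rank $e-1$. Since $E$ is an ideal module, $R$ is Cohen--Macaulay, $\mathrm{grade}\,E/U = l-e-1 \geq 2$, and $E$ is free locally in codimension $l-e-2$, Proposition \ref{S-U-V 5.3} (applied with $s = l-e-1$) gives that $I$ satisfies $G_{l-e-1}$ and $AN_{l-e-2}$. Combined with the hypothesis $G_{l-e+1}$ on $E$ and Lemma \ref{G_l} (iterated $e-1$ times, using $\ell(\overline{E}) = \ell(E)-1$ and $r(\overline{E}) \leq r(E)$ at each step), I get that $I$ satisfies $G_{l-e+1}$ with $\ell(I) = l - (e-1) = l-e+1$ and $r(I) \leq r(E) \leq s$. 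Write $L = l-e+1 = \ell(I)$; the Artin--Nagata hypotheses of Theorem \ref{J-U0} for $I$ — namely $AN_{L-3}$ locally in codimension $L-1$ and $AN_{L-\max\{2,s\}}$ globally — should follow from the $AN_{l-e-2} = AN_{L-3}$ I already have together with $G_{L}$; note $L-3 = (l-e+1)-3$ matches the codimension bound, and for $s \leq L-1$ the global statement $AN_{L-\max\{2,s\}}$ is no stronger than $AN_{L-3}$ when $s \leq 3$, while for larger $s$ one uses that $G_\infty$-type behavior plus the established Artin--Nagata propagates (Proposition \ref{U1-9} / Theorem \ref{GandSliding}).

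The second main step is the depth transfer. Exactly as in the proof of Theorem \ref{CM}, I would show inductively on $e$ that $\mathrm{depth}\, I^n \geq d - L + s - n + 1$ for $1 \leq n \leq s$. The engine is Lemma \ref{torsionFree}: after adjusting its statement to the present numerics (here the relevant depth bound on $E^n$ is $d - L + s - n + 1$ rather than $d-n$, and the linear-type range is codimension $l-e-2$ coming from the $G$ and freeness hypotheses), one gets $\overline{E}^n = (E'')^n / x(E'')^{n-1}$ is torsion-free, so the short exact sequence
\[
0 \longrightarrow x(E'')^{n-1} \longrightarrow (E'')^n \longrightarrow (E'')^n / x(E'')^{n-1} \longrightarrow 0
\]
together with $\mathrm{depth}(E'')^n = \mathrm{depth}\, E^n \geq d - L + s - n + 1$ gives $\mathrm{depth}\,\overline{E}^n \geq d - L + s - n + 1$. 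Iterating over the $e-1$ generic elements $x_1,\dots,x_{e-1}$ yields $\mathrm{depth}\, I^n = \mathrm{depth}(E''/F)^n \geq d - L + s - n + 1$ for $1 \leq n \leq s$, which is precisely the depth hypothesis $\mathrm{depth}\, I^n \geq d - \ell(I) + s - n + 1$ demanded by Theorem \ref{J-U0}.

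Having verified all hypotheses of Theorem \ref{J-U0} for the ideal $I$ — height $\mathrm{grade}\, I \geq 2$, $\ell(I) = L$, $r(I) = r \leq s$, the $G_L$ and Artin--Nagata conditions, and the depth bound — I conclude $\mathcal{R}(I)$ is Cohen--Macaulay. Finally Theorem \ref{EtoI}(1)(a) transfers this back: $\mathcal{R}(E)$ is Cohen--Macaulay.

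I expect the main obstacle to be the bookkeeping in the depth-transfer induction and in confirming that the Artin--Nagata conditions survive the passage from $E$ to $I$ with the correct indices: one must be careful that the freeness assumption ``$E$ free on codimension $l-e-2$'' feeds into Proposition \ref{S-U-V 5.3} with exactly $s = l-e-1$, and that the linear-type-in-codimension statement of Lemma \ref{torsionFree} still applies when the depth hypothesis on $E^n$ is the weaker Johnson--Ulrich bound rather than $d-n$. Checking that $d - L + s - n + 1 \geq 1$ throughout the relevant range (which is what makes the torsion-freeness argument run) uses $n \leq s$ and $L \leq d + e - 1 - (e-1) = d$, so the inequality holds; but this needs to be stated cleanly. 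A secondary subtlety is the base case of the induction on $e$ (the ideal case $e=1$), where the statement reduces directly to the hypotheses with no Bourbaki reduction needed.
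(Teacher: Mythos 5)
Your overall strategy---pass to the generic Bourbaki ideal $I$ with respect to $U$, transfer the depth bounds on $E^{n}$ to $I^{n}$ via torsion-freeness of $(E'')^{n}/x(E'')^{n-1}$, verify the Johnson--Ulrich hypotheses for $I$, apply Theorem \ref{J-U0} and then Theorem \ref{EtoI}---is exactly the paper's. But there is a genuine gap in the torsion-freeness step, precisely at the point you yourself flag as the ``main obstacle'' and then leave unresolved. Your depth computation shows $\mathrm{depth}\,((E'')^{n}/x(E'')^{n-1})_{p}\geq 1$ only when $\mathrm{height}\,p\geq l-e+1$ (since the worst case $n=s$ of the bound $d-(l-e+1)+s-n+1$ forces $\mathrm{height}\,p\geq l-e+1$ for positivity), while your linear-type argument, which rests on the hypothesis that $E$ is free in codimension $l-e-2$, covers only $\mathrm{height}\,p\leq l-e-2$. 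Nothing in your proposal handles primes of height $l-e-1$ or $l-e$, and Lemma \ref{torsionFree} cannot simply be ``adjusted'' to cover them, because its proof invokes Theorem \ref{linear}, which needs $G_{\infty}$ and the stronger bound $\mathrm{depth}\,E^{n}\geq d-n$ that you do not have here.

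The paper closes this window differently: it establishes linear type on all of codimension $l-e$, not just where $E$ is free. By Proposition \ref{S-U-V 5.3} (with the grade hypothesis $\mathrm{grade}\,E/U=l-e-1$ and freeness in codimension $l-e-2$) the Bourbaki ideal $I$ satisfies $AN_{l-e-2}$, and by Lemma \ref{G_l} it satisfies $G_{l-e+1}$; hence locally in codimension $\leq l-e$ it is $G_{\infty}$ and $AN_{d-2}$, so Proposition \ref{U1-9} gives sliding depth, Corollary \ref{HSV6.3} gives that $I$ is of linear type there, and Theorem \ref{EtoI} transfers linear type back to $E''$ in codimension $l-e$. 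That is what makes $(E'')^{n}/x(E'')^{n-1}$ equal to $(\overline{E})^{n}$ (hence torsion-free) on the two missing codimensions. You cite all the right ingredients (Proposition \ref{U1-9}, Corollary \ref{HSV6.3}) but only in connection with the Artin--Nagata bookkeeping, not where they are actually needed. A secondary point: your verification of the global $AN_{L-\max\{2,s\}}$ hypothesis of Theorem \ref{J-U0} is asserted rather than proved, but the paper is equally terse there, resting on the $AN_{l-e-2}$ supplied by Proposition \ref{S-U-V 5.3}.
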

\begin{proof}
Let $I$ be the generic Bourbaki ideal of $E$ with respect to $U$.
We will show $(E'')^{n}/x(E'')^{n}$ is torsion-free as in section
3. Then we can pass all the depth conditions to the ideal $I$, hence
by Lemma \ref{G_l}, Theorem \ref{J-U0} and \ref{EtoI}, we have
$\mathcal{R}(E)$ is Cohen-Macaulay. 

On codimension $l-e$, by Lemma \ref{G_l} and Proposition \ref{S-U-V 5.3},
we have $I$ is $G_{l-e+1}$ and $AN_{l-e-2}$. Hence on codimension
$l-e$, $I$ is $G_{\infty}$ and $AN_{l-e-2}$. Then by Proposition
\ref{U1-9}, $I$ satisfies sliding depth. Hence by Corollary \ref{HSV6.3},
$I$ is an ideal of linear type and $E$ is an module of linear type
by Theorem \ref{EtoI} on codimension $l-e$. Therefore $(E'')^{n}/x(E'')^{n}$
is torsion-free on codimension $l-e$.

On codimension $d>l-e$, we apply depth conditions on the exact sequence
$0\rightarrow x(E'')^{n}\rightarrow(E'')^{n}\rightarrow(E'')^{n}/x(E'')^{n}\rightarrow0$
and use the facts that $d-(l-e)\geq1$ and $s\geq n$, then we obtain
$\mbox{depth}(E'')^{n}/x(E'')^{n}\geq1$ and $(E'')^{n}/x(E'')^{n}$
is torsion-free.
\end{proof}
We are able to loosen the locally free condition if we have some control
on the projective dimension of the module locally.
\begin{cor}
\label{CM3}Let $R$ be a Cohen-Macaulay local ring, and $E$ be an
ideal module with $\ell(E)=l$, $\mathrm{rank}E=e$ and $r(E)=r\geq1$.
Assume $E$ has condition $G_{l-e+1}$. Let $s$ be an integer such
that $E$ is free on codimension $l-e-s$ and $r\leq s$. If $\mathrm{Projdim}E_{P}=2$
and $\mu(E_{P})\leq1/2(\mathrm{height}(P)-1)+e$ for all $\mathrm{height}P\leq l-e$,
and $\mathrm{\mathrm{depth}}E^{n}\geq d-(l-e+1)+s-n+1$ for all $1\leq n\leq s$, then $\mathcal{R}(E)$ is Cohen-Macaulay.\end{cor}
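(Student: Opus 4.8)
The plan is to adapt the argument of Corollary~\ref{CM2}, using the local control on projective dimension in place of the stronger hypothesis that $E$ be free on codimension $l-e-2$. Write $l':=l-e+1$; we may assume the residue field of $R$ is infinite, and, as in the proof of Theorem~\ref{CM}, that the generic Bourbaki ideal below has height $2$ (the case of height $\ge3$ being handled by the splitting $E\cong R^{e-1}\oplus L$ with $L$ an ideal, which reduces the statement to the ideal case). Since $E$ is an ideal module it is orientable, so we may form a generic Bourbaki ideal $I\simeq E''/F$ of $E$ with respect to a general reduction $U$ of $E$, so $\mu(U)=l$; here $F$ is free of rank $e-1$ spanned by a regular sequence of $1$-forms $x_1,\dots,x_{e-1}$ on $\mathcal{R}(E'')$, and since $E$ is $G_{l-e+1}$ one checks $\mathrm{grade}(E/U)\ge l'\ge2$, so $\mathrm{height}\,I\ge2$; moreover, by Proposition~\ref{S-U-V 5.3} applied with the hypothesis that $E$ is free on codimension $l-e-s$, the ideal $I$ satisfies $AN_{l-e-s}$. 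Iterating Lemma~\ref{G_l} gives $\ell(I)=l'$, $r(I)\le r\le s$, and $I$ is $G_{l'}$. By Theorem~\ref{EtoI} it is enough to prove $\mathcal{R}(I)$ is Cohen--Macaulay, which I would deduce from Theorem~\ref{J-U0} applied to $I$ with this $s$, once we check (i) $\mathrm{depth}\,I^n\ge d-l'+s-n+1$ for $1\le n\le s$, and (ii) the Artin--Nagata conditions $AN_{l'-3}$ locally in codimension $l'-1$ and $AN_{l'-\max\{2,s\}}$.

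Exactly as in Section~3 --- the proofs of Theorem~\ref{linear}, Lemma~\ref{torsionFree}, and the induction on $e$ in Theorem~\ref{CM} --- condition (i) reduces to showing $(E'')^n/x_{e-1}(E'')^{n-1}$ is torsion-free for $1\le n\le s$: granting this, the depth bound on $E^n$ is inherited by $(E'')^n$, is unaffected by killing the regular sequence $x_1,\dots,x_{e-1}$, and descends to $I^n$ through the exact sequences $0\to x_{e-1}(E'')^{n-1}\to(E'')^n\to(E'')^n/x_{e-1}(E'')^{n-1}\to0$ and the depth lemma. I would prove the torsion-freeness one prime $P$ at a time. For $\mathrm{height}\,P>l-e$, the exact sequence above together with $\mathrm{depth}\,(E^n)_P\ge d-l'+s-n+1$ and $s\ge n$ forces the quotient to have positive depth at $P$, as in Corollary~\ref{CM2}. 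For $\mathrm{height}\,P\le l-e$, the module $E$ is $G_\infty$ locally (being $G_{l-e+1}$), so by Corollary~\ref{HSV6.3} it suffices to show that $E_P$ has sliding depth; then $E_P$, and hence $E''_P$, is of linear type, so $(E'')^n/x_{e-1}(E'')^{n-1}$ is a graded component of $\mathcal{R}(E''_P/x_{e-1}E''_P)$ and is torsion-free there, as in the proof of Theorem~\ref{linear}. Combining the two ranges yields (i).

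The heart of the proof --- and the step I expect to be the main obstacle --- is the local sliding-depth statement for primes $P$ of height at most $l-e$, which is precisely where the hypotheses $\mathrm{pd}_{R_P}E_P\le2$ and $\mu(E_P)\le\tfrac{1}{2}(\mathrm{height}\,P-1)+e$ enter (for $\mathrm{height}\,P\le l-e-s$ the module $E_P$ is free and there is nothing to do). From a minimal free resolution $0\to R_P^{a}\xrightarrow{\psi}R_P^{b}\xrightarrow{\varphi}R_P^{b_0}\to E_P\to0$ with $b_0=\mu(E_P)$, the first syzygy module has projective dimension at most $1$ and rank $b_0-e\le\tfrac{1}{2}(\mathrm{height}\,P-1)$, and this bound is exactly what is needed for the homology of the approximation complex $\mathcal{Z}(E_P)$ to satisfy the depth inequalities defining $SD_e$; one checks this by a depth chase along the length-two resolution, extending the syzygy computation carried out explicitly in Proposition~\ref{minrank} (or, since $\mathcal{F}_0$ holds here by $G_{l-e+1}$, one applies Theorem~\ref{HSV6.2}). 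The bookkeeping with the length-two resolution and with the precise shape of the approximation complex of a module is where I expect the real work to lie. Granting it, condition (ii) follows as in Corollary~\ref{CM2}: in codimension at most $l'-1$ the ideal $I$ inherits sliding depth and $G_\infty$, so Theorem~\ref{GandSliding} (or Proposition~\ref{U1-9}) gives that $I$ satisfies $AN_t$ locally in codimension $l'-1$ for every $t$ --- in particular $AN_{l'-3}$ there --- and this combines with the global $AN_{l-e-s}$ from Proposition~\ref{S-U-V 5.3}, using $r(E)\ge1$ and $s\ge r$ to align the indices, to give $AN_{l'-\max\{2,s\}}$. With (i) and (ii) in hand, Theorem~\ref{J-U0} shows $\mathcal{R}(I)$ is Cohen--Macaulay, and Theorem~\ref{EtoI} then gives that $\mathcal{R}(E)$ is Cohen--Macaulay.
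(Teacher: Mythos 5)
Your overall route is the same as the paper's: pass to a generic Bourbaki ideal $I$, record $AN_{l-e-s}$, $G_{l-e+1}$, $\ell(I)=l-e+1$ and $r(I)\le s$ from Lemma \ref{G_l} and Proposition \ref{S-U-V 5.3}, transfer the depth bounds on $E^n$ to $I^n$ by proving torsion-freeness of $(E'')^n/x(E'')^{n-1}$ prime by prime (large codimension via the depth count as in Corollary \ref{CM2}, small codimension via linear type), and finish with Theorem \ref{J-U0} and Theorem \ref{EtoI}. However, the step you yourself flag as ``where I expect the real work to lie'' --- deducing acyclicity of $\mathcal{Z}(E_P)$, equivalently linear type of $E_P$, for $\mathrm{height}\,P\le l-e$ from $\mathrm{pd}\,E_P=2$ and $\mu(E_P)\le\tfrac12(\mathrm{height}\,P-1)+e$ --- is left as a promissory note, and it is the entire point of the corollary: it is exactly where the two new hypotheses are consumed. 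The paper's argument here is short and concrete: from $0\to L\to R^n\to E\to0$ one gets $\mathrm{rank}\,L_P=\mu(E_P)-e\le\tfrac12(\mathrm{height}\,P-1)$ and $\mathrm{pd}\,L_P=1$, hence $\mathrm{pd}\,\wedge^tL_P=t$ for $1\le t\le\mathrm{rank}\,L_P$, so by Auslander--Buchsbaum $\mathrm{depth}\,\wedge^tL_P=\mathrm{height}\,P-t\ge t+1\ge2$ (the inequality $\mathrm{height}\,P-t\ge t+1$ is precisely the $\mu$-bound), making each $\wedge^tL_P$ reflexive and the approximation complex acyclic. You should supply this computation rather than defer it; your alternative parenthetical (``apply Theorem \ref{HSV6.2}'') does not help, since that theorem only converts acyclicity plus Cohen--Macaulayness into $SD_e$ plus $\mathcal{F}_0$ or vice versa --- you would still have to establish one side.

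A second, smaller divergence: to feed Theorem \ref{J-U0} you try to verify $AN_{l'-\max\{2,s\}}$ globally by ``aligning indices'' between the global $AN_{l-e-s}=AN_{l'-1-s}$ and the local $AN_\infty$ in codimension $l'-1$; as written this is an off-by-one hand-wave ($AN_{l'-1-s}$ is weaker than $AN_{l'-s}$). The paper instead runs an induction on $d$, uses the acyclicity just established together with Theorems \ref{H-S-V5_6}, \ref{HSV6.2} and \ref{GandSliding} to get sliding depth, $G_\infty$ and hence $AN_\infty$ for $I$ in codimension $l-e=\ell(I)-1$, and only then invokes Theorem \ref{J-U0}. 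If you keep your version of the argument, you need to make the Artin--Nagata bookkeeping precise rather than asserting the indices align.
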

\begin{proof}
Let $I$ be a generic Bourbaki ideal of $E$. Notice that $I$ satisfies
$AN_{l-e-s}$ and $G_{l-e+1}$ with $r(I)\leq s$ and $l(I)=l-e+1$
by Lemma \ref{G_l} and Proposition \ref{S-U-V 5.3}. As before, we
will show the depth conditions can pass to the ideal $I$, i.e. we
will show $(E'')^{n}/x(E'')^{n}$ is torsion-free. 

Notice that on codimension $d>l-e$, the depth condition is the same
as in Corollary \ref{CM2}, hence $(E'')^{n}/x(E'')^{n}$ is torsion-free
on codimension $d>l-e$. On codimension $d\leq l-e$, we consider
the exact sequence $0\rightarrow L\rightarrow R^{n}\rightarrow E\rightarrow0$, then $\mbox{rank}L_{P}=\mu(E_{P})-e\leq1/2(\mbox{height}(P)-1)$ for all
$\mbox{height}P\leq l-e$. Since $\mbox{Projdim}E_{P}=2$, $\mbox{Projdim}\wedge^{t}L_{P}=t$
for all $1\leq t\leq\mbox{rank}L_{P}$, hence $\mbox{depth}\wedge^{t}L_{P}=\mbox{height}P-t\geq t+1\geq2$
for all $\mbox{height}P\leq l-e$ and $\wedge^{t}L_{P}$ is reflexive.
We obtain the $\mathcal{Z}(E)$-complex is acyclic and $E$ is of
linear type on codimension $l-e$ and therefore $I$ is of linear
type on codimension $l-e$ by Theorem \ref{EtoI}. By using the same
proof of Lemma \ref{torsionFree}, we have $(E'')^{n}/x(E'')^{n}$
is torsion-free.

To prove $\mathcal{R}(E)$ is Cohen-Macaulay, we use induction on
$d$ and assume $\mathcal{R}(E)$ is Cohen-Macaulay on codimension
$l-e$. Since $E$ is of linear type and $\mathcal{Z}(E)$-complex
is acyclic on codimension $l-e$, then by Theorem \ref{H-S-V5_6}
and Theorem \ref{EtoI}, $\mathcal{Z}(I)$-complex is acyclic and
$\mathcal{S}(I)=\mathcal{R}(I)$ is Cohen-Macaulay hence $I$ is sliding
depth on codimension $l-e$ by Theorem \ref{HSV6.2}. Moreover, since
$I$ is of linear type, $I$ satisfies $G_{\infty}$. Therefore
$I$ is $AN_{\infty}$ on codimension $l-e$ by Theorem \ref{GandSliding}.
This shows $\mathcal{R}(I)$ is Cohen-Macaulay by Theorem \ref{J-U0},
hence by Theorem \ref{EtoI}, $\mathcal{R}(E)$ is Cohen-Macaulay. 
\end{proof}
\textbf{Acknowledgments:} Part of this work was done when the author
was a graduate student at Purdue University under the direction of
Professor Bernd Ulrich. The author is very grateful for so many useful
suggestions from Professor Ulrich.

\address{\begin{center}
{\small{}The Penn State University, Greater Allegheny Campus, 4000
University Dr., McKeesport, PA 15132, USA}
\par\end{center}}

\email{\begin{center}
kul20@psu.edu 

http://www.personal.psu.edu/kul20/
\par\end{center}}

\end{document}